\def \phi {\varphi}
\def \RN {\mathbb{R}^N}
\def \R {\mathbb{R}}
\def \G{\Gamma}
\newcommand{\rr}{{\mathbb{R}}}
\newcommand{\al}{\alpha}
\newcommand{\escpr}[1]{\langle#1\rangle}
\newcommand{\Ba}{\mathscr B_\alpha}
\def \vf{\varphi}
\newcommand{\Rn}{\mathbb R^n}
\newcommand{\Rm}{\mathbb R^m}
\newcommand{\Hn}{\mathbb H^n}
\newcommand{\p}{\partial}
\newcommand{\bG}{\mathbb {G}}
\newcommand{\la}{\lambda}
\numberwithin{equation}{section}
\newcommand{\beq}{\begin{equation}}
\newcommand{\bea}[1]{\begin{array}{#1} }
\newcommand{\eeq}{ \end{equation}}
\newcommand{\ea}{ \end{array}}
\newcommand{\ve}{\varepsilon}
\newcommand{\nh}{\nabla_H}
\newcommand{\sul}{\Delta_H}
\newcommand{\sa}{\langle}
\newcommand{\da}{\rangle}
\newcommand{\C}{\mathbb{C}}
\newtheorem{theorem}{Theorem}[section]
\newtheorem{lemma}[theorem]{Lemma}
\newtheorem{proposition}[theorem]{Proposition}
\newtheorem{corollary}[theorem]{Corollary}
\newtheorem{remark}[theorem]{Remark}
\newtheorem{definition}[theorem]{Definition}
\numberwithin{equation}{section}
\begin{document}

\title[A fundamental solution,  etc.]{A fundamental solution for a subelliptic operator in Finsler geometry}

\keywords{Minkowski gauges. Anisotropic Legendre transformation. Baouendi-Grushin operators in Finsler geometry. Fundamental solutions}

\subjclass{35H20, 35A08, 35R03, 35J62, 58J60}

\date{}

\begin{abstract}
We introduce a class of nonlinear partial differential equations in a product space which are at the interface of Finsler and sub-Riemannian geometry. To such equations we associate a non-isotropic Minkowski gauge $\Theta$ for which we introduce a suitable notion of Legendre transform $\Theta^0$. We compute the action of the relevant nonlinear PDEs on ``radial" functions, i.e., functions of $\Theta^0$, and by exploiting it we are able to compute explicit fundamental solutions of such PDEs. 
\end{abstract}

\author{Federica Dragoni}
\address{School of Mathematics\\
Cardiff University\\
Cardiff CF2 4AG
WALES} \email[Federica Dragoni]{DragoniF@cardiff.ac.uk}

\author{Nicola Garofalo}

\address{Dipartimento d'Ingegneria Civile e Ambientale (DICEA)\\ Universit\`a di Padova\\ Via Marzolo, 9 - 35131 Padova,  Italy}
\vskip 0.2in
\email{nicola.garofalo@unipd.it}

\author{Gianmarco Giovannardi}
\address{Department of Mathematics and Computer Science\\
Universit\`a di Firenze\\
50134 Florence,
ITALY}\email[Gianmarco Giovannardi]{gianmarco.giovannardi@unifi.it}

\author{Paolo Salani}
\address{Department of Mathematics and Computer Science\\
Universit\`a di Firenze\\
50134 Florence,
ITALY}\email[Paolo Salani]{paolo.salani@unifi.it}

\thanks{N. Garofalo is supported in part by a Progetto SID (Investimento Strategico di Dipartimento): ``Aspects of nonlocal operators via fine properties of heat kernels", University of Padova (2022); and by a PRIN (Progetto di Ricerca di Rilevante Interesse Nazionale) (2022): ``Variational and analytical aspects of geometric PDEs". He has also been partially supported by a Visiting Professorship at the Arizona State University. G. Giovannardi is supported in part by INdAM–GNAMPA 2023 Project \emph{Variational and non-variational problems with lack of compactness}. G. Giovannardi and P. Salani are supported in part by a PRIN (2022): "Geometric-Analytic Methods for PDEs and Applications (GAMPA)".}

\maketitle

\tableofcontents

\section{Introduction}\label{S:intro}

In this paper we compute explicit fundamental solutions for a new class of nonlinear partial differential operators which arise at the interface of Finsler and sub-Riemannian geometry. These two different geometries have developed independently of one another, but problems from the applied sciences (e.g., quasi-crystalline structures in gravitational physics, see \cite{V1}, \cite{V2}, \cite{BD} and the references therein) suggest that it is of interest merging them into a larger unifying body.  

Our starting point is the following prototypical model of a linear subelliptic partial differential operator of order two
in $\RN = \Rm\times \R^k$, with $z\in \Rm$, $\sigma\in \R^k$,   
\begin{equation}\label{bg}
\mathscr B_\alpha = \Delta_z + \frac{|z|^{2\alpha}}4 \Delta_\sigma,\ \ \ \ \ \ \ \ \ \ \ \ \ \ \ \ \alpha>0.
\end{equation}
Remarkably, $\Ba$ presents itself in several different areas, including free boundaries problems, analysis and geometry of CR manifolds, quasiconformal mappings, or the strong rigidity of locally symmetric spaces. For some of these aspects, the reader should see Section \ref{S:motback} below. The operator \eqref{bg} was first introduced by S. Baouendi in his doctoral dissertation \cite{B}. Subsequently, Grushin and Vishik  studied hypoellipticity questions in \cite{Gr1}, \cite{Gr2}, \cite{GV1}, \cite{GV2}. In \cite{Je} D. Jerison studied the solvability of the Dirichlet problem at characteristic points for the model case $m=k=\alpha= 1$ in \eqref{bg}. 
A seminal study of the local properties of weak solutions of equations modelled on \eqref{bg} was conducted in the mid 80's by Franchi and Lanconelli, see \cite{FLto}, \cite{FL}, \cite{FL84}, \cite{FL85}. There has been since a large body of works devoted to the many challenging aspects of the operator \eqref{bg}, and it would be impossible to provide a complete list here.  We confine ourselves to cite the papers \cite{G}, \cite{GS}, \cite{MM}, \cite{GRo} and the recent preprint \cite{BG}, as they directly exploit the function \eqref{Ga} below, and are in one way or another closely connected to the present one. In particular, in \cite{G} the second named author first discovered  the following explicit fundamental solution for $-\mathscr B_\alpha$ with singularity at $(0,0)$, and used such function to establish a basic monotonicity formula of Almgren type, see \cite[Theorem 4.2]{G}. Consider the anisotropic gauge defined by
\begin{align}\label{ra}
R_\alpha(z,\sigma) & = \left(|z|^{2(\alpha+1)}+4(\alpha+1)^2|\sigma|^2\right)^{\frac1{2(\alpha+1)}}.
\end{align}
Then, with a $C_\alpha>0$ explicitly given, 
 the function 
\begin{equation}\label{Ga}
 \Gamma_\alpha(z,\sigma)=\frac{C_{\alpha}}{R_\alpha(z,\sigma+\sigma')^{m + (\alpha+1) k-2}}
\end{equation}
is a fundamental solution for $-\mathscr B_\alpha$ with singularity at $(0,\sigma')\in \RN$.
Fundamental solutions and/or heat kernels with pole at arbitrary points, and for special values of $\alpha$, were constructed in \cite{BGG2}, \cite{BFIh}, \cite{BFI} and \cite{GTpotan}, but same are presently unknown for arbitrary $\alpha$ and singularities.

To introduce the questions of interest in the present work, consider the degenerate energy 
\begin{equation}\label{ebg}
E_{\alpha,p}(u) = \frac 1p \int_{\RN} \left(|\nabla_z u|^2 + \frac{|z|^{2\alpha}}4 |\nabla_\sigma u|^2\right)^{\frac p2} dzd\sigma,\ \ \ \ \ \ \ \ \ \ \ 1<p<\infty.
\end{equation}
We note that, when $p=2$, the Euler-Lagrange equation of \eqref{ebg} is $\mathscr B_\alpha u = 0$. When $p\not= 2$, degenerate energies such as \eqref{ebg} arise, e.g., in the foundational work of Kor\'anyi and H. M. Reimann \cite{KoR}, \cite{KoRaim}, and of Mostow \cite{Mo} and  Margulis and Mostow \cite{MaM}, see also \cite{CDGcpde}, \cite{D}, \cite{CDG} and \cite{HH}.

Suppose now that, more in general, on each of the two layers of $\RN= \Rm\times \R^k$ we assign strictly-convex  Minkowski norms 
$\Phi\in C^2(\Rm\setminus\{0\})$ and $\Psi\in C^2(\R^k\setminus\{0\})$, and respectively denote by $\Phi^0$ and $\Psi^0$ their Legendre transforms 
\begin{equation}\label{dual0}
\Phi^0(z) = \underset{\Phi(\zeta) = 1}{\sup}\ \sa z,\zeta\da, \ \ \ \ \ \ \ \ \Psi^0(\sigma) = \underset{\Phi(\eta) = 1}{\sup}\ \sa \sigma,\eta\da.
\end{equation}
The functions $\Phi^0$ and $\Psi^0$ are themselves norms, and their levels sets are often referred to as \emph{Wulff shapes}. 
We are interested in understanding critical points of the following generalisation of the energy \eqref{ebg}, 
\begin{equation}\label{Fen}
\mathscr E_{\alpha,p}(u) = \frac 1p \int_{\RN} \left(\Phi(\nabla_z u)^2 + \frac{\Phi^0(z)^{2\alpha}}4 \Psi(\nabla_\sigma u)^2\right)^{\frac p2} dzd\sigma,\ \ \ \ \ \ \ \ \ \ \ 1<p<\infty.
\end{equation}
It is clear that the special choice $\Phi(z) = |z|$, $\Psi(\sigma) = |\sigma|$, gives back \eqref{ebg}.
The Euler-Lagrange equation of \eqref{Fen} is the quasilinear PDE
\begin{equation}\label{Fel}
\mathscr L_{\alpha,p}(u) = \operatorname{div}_{(z,\sigma)} \left(\left(\Phi(\nabla_z u)^2 + \frac{\Phi^0(z)^{2\alpha}}4 \Psi(\nabla_\sigma u)^2\right)^{\frac{p-2}2} \mathscr A(\nabla_{(z,\sigma)} u)\right)\ =\ 0,
\end{equation}
where we have denoted
\begin{equation}\label{A}
\mathscr A(\nabla_{(z,\sigma)} u) = \begin{pmatrix} \Phi(\nabla_z u) \nabla \Phi(\nabla_z u)
\\
\frac{\Phi^0(z)^{2\alpha}}4 \Psi(\nabla_\sigma u) \nabla \Psi(\nabla_\sigma u)
\end{pmatrix}.
\end{equation}
In the special case in which $p=2$, the operator \eqref{Fel} becomes
\begin{equation}\label{Fel2}
\mathscr L_{\alpha,2} u = \Delta_{\Phi}(u) + \frac{\Phi^0(z)^{2\alpha}}4  \Delta_\Psi(u).
\end{equation}
In \eqref{Fel2} we have respectively indicated with $\Delta_{\Phi}$ and $\Delta_{\Psi}$ the Finsler Laplacians in $\Rm$ and $\R^k$ with respect to the norms $\Phi$ and $\Psi$, i.e., the operators 
\begin{equation}\label{Flap}
\Delta_\Phi(u) = \operatorname{div}_z(\Phi(\nabla_z u)\nabla \Phi(\nabla_z u)),\ \ \ \ \ \ \ \ \ \Delta_\Psi(u) = \operatorname{div}_\sigma(\Psi(\nabla_\sigma u)\nabla \Phi(\nabla_\sigma u)),
\end{equation}
see the appendix in Section \ref{S:app}.
 We emphasise that since each of these operators is nonlinear,  the same is true for the operator \eqref{Fel2}. The only choice that makes it linear, is $\Phi(z) = |z|$, $\Psi(\sigma) = |\sigma|$, in which case $\mathscr L_{\alpha,2}$ becomes the Baouendi-Grushin operator \eqref{bg}. 

To proceed in our discussion, we need to take a small detour. In sub-Riemannian geometry the tangent space is not Euclidean, but it is formed by a stratification of Euclidean spaces, each endowed with dilations weighted according to the relative position in the stratification. This anisotropy is typical of physical systems with non-holonomic constraints, in which motion is only allowed in certain directions prescribed by the physical problem at hand. The appropriate geometric setup is that of stratified nilpotent Lie groups, aka Carnot groups, or more in general, the Lie groups of homogeneous type in \cite{FS}. In this framework, Euclidean norms need to be replaced by an appropriately chosen \emph{anisotropic gauge} that weighs different directions accordingly. For instance, the Euclidean model \eqref{ra} above is one-homogeneous with respect to the following family of anisotropic dilations
\begin{equation}\label{dil}
\delta_t(z,\sigma) = (t z, t^{\alpha+1} \sigma),\ \ \ \ \ \ \ t>0.
\end{equation}
This suggests that with the energy \eqref{Fen} we should associate the following  anisotropic \emph{Minkowski gauge} in $\RN$ 
\begin{equation}\label{theta}
\Theta(z,\sigma) = \left(\Phi(z)^{2(\alpha+1)} + 4(\alpha+1)^2 \Psi(\sigma)^2\right)^{\frac{1}{2(\alpha+1)}},
\end{equation}
for which we clearly have $\Theta(\delta_t(z,\sigma)) = t \Theta(z,\sigma)$.
Now, in Finsler geometry the dual Minkowski norm (see \eqref{dual} in Section \ref{S:app}) plays a central role. If one wants to introduce such geometry in a sub-Riemannian setting, one is immediately confronted with the fact that the classical Legendre transformation does not work for a non-isotropic gauge such as \eqref{theta}. To overcome such hindrance, we introduce a new Legendre transformation $\Theta^0$, 
namely
\begin{equation}\label{theta0}
\Theta^0(z,\sigma)^{\alpha+1}= \underset{\Theta(\xi,\tau) = 1}{\sup}\ \bigg(|\sa z,\xi\da|^{\alpha+1}+ 4(\alpha+1)^2 \sa\sigma, \tau\da\bigg).
\end{equation}
The remarkable feature of \eqref{theta0} is underscored by Proposition \ref{P:newL} below, in which we solve the relevant Lagrange multiplier problem, and prove the notable property that
\begin{equation}\label{theta00}
\Theta^0(z,\sigma)= \left(\Phi^0(z)^{2(\al +1)}+  4(\alpha+1)^2 \Psi^0(\sigma)^2  \right)^{\frac{1}{2(\al+1)}},
\end{equation}
where $\Phi^0$ and $\Psi^0$ are the classical Legendre transforms in \eqref{dual0}.
The reader should note the striking symmetry between \eqref{theta} and \eqref{theta00}. From this perspective, one should think of \eqref{ra} above as a special instance of \eqref{theta00}, except that this important distinction disappears when $\Phi(z) = |z|$ and $\Psi(\sigma) = |\sigma|$, since in such case $\Phi^0(z) = \Phi(z)$ and $\Psi(\sigma) = \Psi^0(\sigma)$.

\medskip
 
With \eqref{theta00} in hands, we can now state the first main result in this paper. Henceforth, we denote
\begin{equation}\label{Q}
Q = Q_\alpha = m + (\alpha+1) k.
\end{equation}
Observe that since Lebesgue measure in $\RN$ scales according to the formula 
\[
d(\delta_t(z,\sigma)) = t^Q dzd\sigma,
\]
such number plays the role of a dimension. Also, for brevity we will from now on in this paper denote $\rho(z,\sigma) = \Theta^0(z,\sigma)$ the dual anisotropic Minkowski gauge \eqref{theta00}.

\begin{theorem}\label{T:halleyop}
Let $\alpha>0$ and $p>1$. Given $F\in C^2(\R^+)$, we have in $\RN\setminus\{0\}$
\begin{align}\label{lollypop}
\mathscr L_{\alpha,p}(F\circ \rho) & = (p-1) |F'(\rho)|^{p-2} \left\{F''(\rho) + \frac{Q-1}{p-1}\frac{F'(\rho)}{\rho}\right\}\left(\frac{\Phi^0(z)}{\rho}\right)^{\alpha p}.\end{align}
In particular, the function
\begin{equation}\label{cand}
u = \begin{cases}
\rho^{-\frac{Q-p}{p-1}},\ \ \ \ \ \ \ \ \ \ \ p\not= Q,
\\
\log \rho,\ \ \ \ \ \ \ \ \ \ \ \ \ p = Q,
\end{cases}
\end{equation}
is a solution of $\mathscr L_{\alpha,p}(u) = 0$ in $\RN\setminus\{0\}$. 
\end{theorem}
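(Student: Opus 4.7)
The plan is to establish \eqref{lollypop} by direct substitution, leveraging three standard Finsler identities together with the defining relation for $\rho$. The identities in question (recalled in Section~\ref{S:app}) read
\[
\Phi(\nabla\Phi^0(z)) = 1,\qquad \Phi^0(z)\,\nabla\Phi(\nabla\Phi^0(z))=z,\qquad \langle z,\nabla\Phi^0(z)\rangle=\Phi^0(z),
\]
with analogous versions for $\Psi$. Differentiating $\rho^{2(\alpha+1)}=\Phi^0(z)^{2(\alpha+1)}+4(\alpha+1)^2\Psi^0(\sigma)^2$ gives
\[
\nabla_z\rho=\bigl(\Phi^0(z)/\rho\bigr)^{2\alpha+1}\nabla\Phi^0(z),\qquad \nabla_\sigma\rho=\tfrac{4(\alpha+1)\Psi^0(\sigma)}{\rho^{2\alpha+1}}\,\nabla\Psi^0(\sigma),
\]
so, using the $1$-homogeneity of $\Phi,\Psi$ and the first of the Finsler identities above, for $u=F\circ\rho$ one obtains
\[
\Phi(\nabla_z u)=|F'(\rho)|\bigl(\Phi^0(z)/\rho\bigr)^{2\alpha+1},\qquad \Psi(\nabla_\sigma u)=|F'(\rho)|\,\tfrac{4(\alpha+1)\Psi^0(\sigma)}{\rho^{2\alpha+1}}.
\]

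The first pleasant cancellation happens in the scalar coefficient of $\mathscr L_{\alpha,p}$: plugging these in and recognizing $\Phi^0(z)^{2(\alpha+1)}+4(\alpha+1)^2\Psi^0(\sigma)^2=\rho^{2(\alpha+1)}$ collapses the bracket to $F'(\rho)^2\,\bigl(\Phi^0(z)/\rho\bigr)^{2\alpha}$. Next, I would use the duality identity $\Phi^0(z)\,\nabla\Phi(\nabla\Phi^0(z))=z$ (and its analogue for $\Psi$) to compute
\[
\mathscr A(\nabla_{(z,\sigma)} u)=\tfrac{F'(\rho)\,\Phi^0(z)^{2\alpha}}{\rho^{2\alpha+1}}\begin{pmatrix} z\\ (\alpha+1)\sigma\end{pmatrix},
\]
so that the vector field whose Euclidean divergence defines $\mathscr L_{\alpha,p}(u)$ becomes $V(z,\sigma)=h(\rho)\,\Phi^0(z)^{\alpha p}\bigl(z,(\alpha+1)\sigma\bigr)^{T}$, with $h(\rho):=|F'(\rho)|^{p-2}F'(\rho)/\rho^{\alpha p+1}$.

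Finally, I would compute $\operatorname{div}V$ by the Leibniz rule: $\operatorname{div}_z z=m$, $\operatorname{div}_\sigma\sigma=k$, and Euler's identity $\langle z,\nabla\Phi^0(z)\rangle=\Phi^0(z)$ (with its $\Psi$-analogue) dispatch the polynomial pieces, while $\langle z,\nabla_z\rho\rangle+\langle\sigma,\nabla_\sigma\rho\rangle$ recombines via the defining relation of $\rho$ once more. Collecting terms yields
\[
\operatorname{div}V=\Phi^0(z)^{\alpha p}\bigl[(Q+\alpha p)\,h(\rho)+\rho\,h'(\rho)\bigr],
\]
and expanding $\rho h'(\rho)=(p-1)|F'|^{p-2}F''/\rho^{\alpha p}-(\alpha p+1)h(\rho)$ triggers the cancellation $Q+\alpha p-(\alpha p+1)=Q-1$ and produces exactly \eqref{lollypop}. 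The candidates in \eqref{cand} then follow by solving the linear Euler equation $F''+\tfrac{Q-1}{p-1}\tfrac{F'}{\rho}=0$.

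The main obstacle I expect is the bookkeeping of these two cancellations: the first must collapse the scalar factor, entangling all of $\Phi$, $\Psi$, $\Phi^0$, $\Psi^0$, into a term depending only on $\rho$ and the ratio $\Phi^0(z)/\rho$, while the second requires that the two separate occurrences of $\alpha p$—one from differentiating $\Phi^0(z)^{\alpha p}$ via Euler, the other from differentiating $\rho^{-\alpha p-1}$—cancel exactly to leave behind the dimensional constant $Q-1$. Once the Finsler identities from the appendix are in place, these are essentially the only nontrivial steps, and one should be able to dispatch the whole computation in a page or so.
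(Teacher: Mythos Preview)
Your argument is correct and is in fact tidier than the paper's. The paper establishes \eqref{lollypop} in two stages: it first proves the eikonal identity (your ``first cancellation'') as Lemma~\ref{L:en}, then separately computes $\mathscr L_{\alpha,2}(\rho)$ in Lemma~\ref{L:L2rho} via Corollary~\ref{C:supernice} applied to each layer, and finally expands the divergence in $\mathscr L_{\alpha,p}(F\circ\rho)$ by the product rule into three pieces \eqref{un}, \eqref{due}, \eqref{tre}, the last of which is a vanishing claim that must be checked by hand. Your route diverges at the point where you invoke Lemma~\ref{L:BP} to rewrite $\mathscr A(\nabla_{(z,\sigma)}u)$ as a scalar multiple of the infinitesimal generator $(z,(\alpha+1)\sigma)$ of the dilations \eqref{dil}. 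This is the key simplification: once the vector field is in that form, the anisotropic Euler relation $\langle z,\nabla_z\rho\rangle+(\alpha+1)\langle\sigma,\nabla_\sigma\rho\rangle=\rho$ (note the factor $(\alpha+1)$, which is missing in your write-up but is forced by your $V$) makes the divergence a one-line computation, and the vanishing of the paper's term \eqref{tre} and the content of Lemma~\ref{L:L2rho} are absorbed automatically into the single cancellation $Q+\alpha p-(\alpha p+1)=Q-1$. The trade-off is that the paper's decomposition isolates the $p=2$ case (Remark~\ref{R:p2}) as a structural lemma of independent interest, whereas in your approach it is not singled out.
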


In the remark that follows, we explicitly underline an interesting and perhaps unexpected consequence of Theorem \ref{T:halleyop}. 

\begin{remark}\label{R:p2}
When $p = 2$, formula \eqref{lollypop} reads
\begin{align}\label{lollypoppino}
\mathscr L_{\alpha,2}(F\circ \rho) & = \left\{F''(\rho) + \frac{Q-1}{\rho} F'(\rho)\right\}\left(\frac{\Phi^0(z)}{\rho}\right)^{2\alpha }.
\end{align}
Since the right-hand side of \eqref{lollypoppino} is linear on functions $F\circ \rho$, it follows that, despite its above noted strong nonlinearity, the operator $\mathscr L_{\alpha,2}$ acts linearly on functions of the anisotropic dual gauge \eqref{theta00}.
\end{remark}

\begin{remark}\label{R:fed}
We emphasise that the distortion factors $\left(\frac{\Phi^0(z)}{\rho}\right)^{\alpha p}$ in \eqref{lollypop}, or  $\left(\frac{\Phi^0(z)}{\rho}\right)^{2\alpha }$ in \eqref{lollypoppino}, are typically sub-Riemannian phenomena, see Lemmas \ref{L:en} and \ref{L:L2rho} below. In Euclidean Finsler geometry such factors are not present, see Corollary \ref{C:supernice} in Section \ref{S:app} below. Without such factors one would for instance infer that $\mathscr L_{\alpha,2}$ of a function of $\rho = \Theta^0$ is a function of $\Theta^0$, a symmetry property which is totally false in sub-Riemannian geometry, even in the case $\Phi(z) = |z|$, $\Psi(\sigma) = |\sigma|$!
\end{remark}

The second main result of the present paper is the following.

\begin{theorem}\label{T:wow}
Given $\alpha>0$ and $1<p<\infty$, let 
\begin{equation}\label{Cip}
\omega_{\alpha,p} = \int_{\{\rho<1\}} \left(\frac{\Phi^0(z)}{\rho}\right)^{\alpha p} dz d\sigma,\ \ \ \ \ \ \sigma_{\alpha,p} = Q \omega_{\alpha,p}.
\end{equation}
Define $C_{\alpha,p}>0$ as follows
\begin{equation}\label{Cippy}
C_{\alpha,p} = \begin{cases} 
\frac{p-1}{Q-p} \left(\sigma_{\alpha,p}\right)^{-1/(p-1)},\ \ \ \ \ p\not= Q,
\\
\\
\sigma_{\alpha,Q}^{-1/(Q-1)},\ \ \ \ \ \ \ \ \ \ p = Q.
\end{cases}
\end{equation}
Then the function
\begin{equation}\label{wow}
\mathscr G_{\alpha,p}(z,\sigma) = \begin{cases}
C_{\alpha,p}\ \rho(z,\sigma)^{-\frac{Q-p}{p-1}},\ \ \ \ \ \ \ \ p\not= Q,
\\
\\
C_\alpha\ \log \rho(z,\sigma),\ \ \ \ \ \ \ \ \ \ \ \ p = Q,
\end{cases}
\end{equation}
is a fundamental solution  of $-\mathscr L_{\alpha,p}$ with pole in $(0,0)$.
\end{theorem}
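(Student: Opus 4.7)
By Theorem \ref{T:halleyop} the function $\mathscr G_{\alpha,p}$ is a classical solution of $\mathscr L_{\alpha,p}(u)=0$ on $\RN\setminus\{0\}$, so the only task is to extract the Dirac mass at the origin. Writing $B(\nabla u)=(\Phi(\nabla_z u)^2+\tfrac{\Phi^0(z)^{2\alpha}}{4}\Psi(\nabla_\sigma u)^2)^{(p-2)/2}\mathscr A(\nabla u)$, so that $\mathscr L_{\alpha,p}(u)=\operatorname{div} B(\nabla u)$, the goal is to show that for every $\phi\in C^\infty_c(\RN)$
\begin{equation*}
\int_{\RN}\langle B(\nabla\mathscr G_{\alpha,p}),\nabla\phi\rangle\,dzd\sigma=\phi(0,0).
\end{equation*}
For $\epsilon>0$ small, I would apply the divergence theorem on $\Omega_\epsilon:=\{\rho>\epsilon\}\cap\operatorname{supp}\phi$: the interior integral $\int_{\Omega_\epsilon}\phi\operatorname{div} B(\nabla\mathscr G_{\alpha,p})\,dzd\sigma$ vanishes, and since the outward normal to $\Omega_\epsilon$ on $\{\rho=\epsilon\}$ equals $-\nabla\rho/|\nabla\rho|$, one obtains
\begin{equation*}
\int_{\Omega_\epsilon}\langle B(\nabla\mathscr G_{\alpha,p}),\nabla\phi\rangle\,dzd\sigma=-\int_{\{\rho=\epsilon\}}\phi\,\frac{\langle B(\nabla\mathscr G_{\alpha,p}),\nabla\rho\rangle}{|\nabla\rho|}\,d\mathcal H^{N-1}.
\end{equation*}
The bound $|B(\nabla\mathscr G_{\alpha,p})|=O(\rho^{1-Q})$ (which is locally integrable) justifies sending the left-hand side to the full integral by dominated convergence as $\epsilon\to 0^+$.

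\textbf{Closed form for the flux.} The key computation is an explicit formula for $\langle B(\nabla F(\rho)),\nabla\rho\rangle$. Differentiating $\rho^{2(\alpha+1)}=\Phi^0(z)^{2(\alpha+1)}+4(\alpha+1)^2\Psi^0(\sigma)^2$ and using the duality identities $\Phi(\nabla\Phi^0)=\Psi(\nabla\Psi^0)=1$ one derives the anisotropic eikonal relation
\begin{equation*}
\Phi(\nabla_z\rho)^2+\frac{\Phi^0(z)^{2\alpha}}{4}\Psi(\nabla_\sigma\rho)^2=\left(\frac{\Phi^0(z)}{\rho}\right)^{2\alpha}.
\end{equation*}
Combined with the Euler-type identity $\langle\mathscr A(\nabla u),\nabla u\rangle=\Phi(\nabla_z u)^2+\tfrac{\Phi^0(z)^{2\alpha}}{4}\Psi(\nabla_\sigma u)^2$ applied to $u=F(\rho)$, and the $0$-homogeneity of $\nabla\Phi$ and $\nabla\Psi$, this yields $\mathscr A(\nabla F(\rho))=F'(\rho)\,\mathscr A(\nabla\rho)$, and hence
\begin{equation*}
\langle B(\nabla F(\rho)),\nabla\rho\rangle=|F'(\rho)|^{p-2}F'(\rho)\left(\frac{\Phi^0(z)}{\rho}\right)^{\alpha p}.
\end{equation*}
Specialising to $F(\rho)=C_{\alpha,p}\rho^\beta$ with $\beta=-(Q-p)/(p-1)$, the algebraic identity $(\beta-1)(p-1)=1-Q$ shows that on $\{\rho=\epsilon\}$ the integrand equals $-|C_{\alpha,p}\beta|^{p-1}\epsilon^{1-Q-\alpha p}\Phi^0(z)^{\alpha p}$; the parallel bookkeeping when $p=Q$ and $F=C_{\alpha,Q}\log\rho$ is identical.

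\textbf{Passage to the limit.} Everything reduces to computing $\int_{\{\rho=\epsilon\}}\Phi^0(z)^{\alpha p}\,d\mathcal H^{N-1}/|\nabla\rho|$. By the coarea formula this equals $\tfrac{d}{d\epsilon}\int_{\{\rho<\epsilon\}}\Phi^0(z)^{\alpha p}\,dzd\sigma=\epsilon^{\alpha p}\tfrac{d}{d\epsilon}\int_{\{\rho<\epsilon\}}(\Phi^0(z)/\rho)^{\alpha p}\,dzd\sigma$; because $\Phi^0(z)/\rho$ is $0$-homogeneous under the dilations \eqref{dil} while Lebesgue measure scales with weight $Q$, the last integral equals $\epsilon^Q\omega_{\alpha,p}$, and the entire expression equals $\sigma_{\alpha,p}\epsilon^{Q+\alpha p-1}$. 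All powers of $\epsilon$ cancel, so using $\phi(z,\sigma)=\phi(0,0)+o(1)$ uniformly on $\{\rho=\epsilon\}$ the surface flux tends to $\phi(0,0)\,|C_{\alpha,p}\beta|^{p-1}\sigma_{\alpha,p}$, which by the very choice \eqref{Cippy} of $C_{\alpha,p}$ equals $\phi(0,0)$. The main obstacle is the coordinated bookkeeping of scaling exponents across the eikonal identity, Euler's relation and the coarea formula; once these ingredients are aligned, the Dirac mass emerges by design, exactly as the constant \eqref{Cippy} is calibrated to ensure.
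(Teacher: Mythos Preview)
Your proposal is correct and follows essentially the same route as the paper: excise a gauge ball $\{\rho\le\ve\}$, apply the divergence theorem together with Theorem \ref{T:halleyop} and the eikonal identity of Lemma \ref{L:en}, and compute the resulting surface flux explicitly. One small addition in your version is that, via coarea and the dilation scaling, you make explicit the identity $\int_{\{\rho=1\}}(\Phi^0/\rho)^{\alpha p}\,dH_{N-1}/|\nabla\rho| = Q\,\omega_{\alpha,p}$, which the paper's proof records as the definition \eqref{sig} of $\sigma_{\alpha,p}$ without checking it against \eqref{Cip}.
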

 
\begin{remark}\label{R:pug}
Since the operator $\mathscr L_{\alpha,p}$ is invariant under Euclidean translations along the manifold $M = \{0\}_z\times \R^k$, by taking $(z,\sigma)\to \mathscr G_{\alpha,p}(z,\sigma+\sigma_0)$ we obtain from Theorem \ref{T:wow} a fundamental solution with pole at any point of $(0,\sigma_0)\in M$.
\end{remark}

\begin{remark}\label{R:gia}
When $\Phi(z) = |z|$ and $\Psi(\sigma) = |\sigma|$, Theorem \ref{T:wow} produces the following fundamental solution
\begin{equation}\label{Gap}
 \Gamma_{\alpha,p}(z,\sigma)=\frac{C_{\alpha,p}}{R_\alpha(z,\sigma)^{\frac{Q-p}{p-1}}}
\end{equation}
 of the nonlinear operator
\[
\operatorname{div}_{(z,\sigma)}\left(\left(|\nabla_z u|^2 + \frac{|z|^{2\alpha}}4 |\nabla_\sigma u|^2\right)^{\frac{p-2}2} \begin{pmatrix} \nabla_z u \\ |z|^{2\alpha} \nabla_\sigma u\end{pmatrix}\right).
\] 
In \eqref{Gap} we have denoted by $R_\alpha(z,\sigma)$ the gauge in \eqref{ra} above. 
\end{remark}
It is worth noting that, even in this specialised context, this result is new (the case $p=2$ was found in \cite{G}, as previously mentioned), and that \eqref{Gap} displays an interesting stability with respect to the parameter of subellipticity $\alpha>0$. By this we mean that if we denote $x=(z,\sigma)\in \RN$, and  let $\alpha\to 0^+$, then in view of \eqref{Cip}, \eqref{Cippy} the function in \eqref{Gap} precisely converges to the well-known
\[
\Gamma_{p}(x)=\frac{C_{N,p}}{|x|^{\frac{N-p}{p-1}}},
\]
fundamental solution of the classical \emph{$p$-Laplacian} $\Delta_p(u) = \operatorname{div}(|\nabla u|^{p-2} \nabla u)$ in $\RN$.  On the other hand, when $\alpha\to 1$, we have $Q\to m+2k$ in \eqref{Q}, and from \eqref{ra} we obtain
\[
R_\alpha(z,\sigma)\ \longrightarrow\ N(z,\sigma) = (|z|^4 + 16 |\sigma|^2)^{1/4},
\]
the Kor\'anyi-Kaplan gauge in $\bG$. We thus recover from \eqref{Gap} the fundamental solution of the horizontal $p$-Laplacian $\Delta_{H,p}(u) = \operatorname{div}_H(|\nh u|^{p-2}\nh u)$ in a group of Heisenberg type $\bG$ discovered in \cite[Theorem 2.1]{CDG}
\begin{equation*}
\Gamma_p(x) = \begin{cases}
\frac{C_p}{N(x)^{\frac{Q-p}{p-1}}},\ \ \ \ \text{when}\ p\not=Q,
\\
\\
C_p \log N(x),\ \ \ \ \ \text{when}\ p=Q,
\end{cases}
\end{equation*}
As it is well-known, the case $p=2$ of this result was first discovered by Folland in \cite{Fo} (when $\bG = \Hn$, the Heisenberg group), and generalised by Kaplan to any group of Heisenberg type in \cite[Theorem 2]{Ka}. For such groups, the case $p = Q$ was also independently found in \cite{HH}.

Some final comments are in order. S. S. Chern has been the great architect of Finsler geometry, which he dubbed as ``Riemannian geometry without the quadratic restriction", see \cite{Chernnot}. This means that the relevant manifold is endowed with a smoothly varying family of Minkowski norms in the tangent space which replace the Euclidean ones. It is from this enriching perspective that Finsler geometry can be viewed as a generalisation of the Riemannian one, see \cite{Chernfinsler} and \cite{Chern}. Its mathematical appeal, and its interest in the applied sciences, see e.g. \cite{Tay}, have caused the subject to undergo a profound development during the past three decades. For an introduction, the reader is referred to the seminal studies \cite{BP}, \cite{BCS}, \cite{BR}, \cite{CeSh}, \cite{Sh1} and \cite{Sh2}. For some beautiful recent developments we refer the reader to \cite{OScpam}, \cite{OSaim}, \cite{Moo}, \cite{MY}, and the references therein. In the framework of PDEs in Finsler spaces, there is a large, growing literature, and it would be impossible to list it here. We confine ourselves to mention the work \cite{CS} by Cianchi and the third named author which treats an overdetermined Serrin type problem regarding the Finsler Laplacian. A generalisation of their results to Finsler Monge-Amp\`ere equations is contained in the recent paper \cite{CSjfa} (see also the bibliography of these works). 

Sub-Riemannian geometry is also an extension of Riemannian geometry. One wants to model phenomena with non-holonomic constraints, when motion at any point is only allowed along a limited set of directions which are prescribed by the physical problem at hand. When the set of directions coincides with the whole tangent space, one obtains Riemannian geometry. The foundations of this geometry were laid in E. Cartan's address \cite{Ca} at the 1928 ICM in Bologna. Nowadays, sub-Riemannian geometry and the closely connected theory of subelliptic partial differential equations have grown into a full fledged area of investigation. 

Interestingly, in 1934 E. Cartan also introduced in Finsler geometry the connection that bears his name, see \cite{Ca34} (Cartan's connection is metric compatible, but it is not Levi-Civita since it is not torsion-free. A different, torsion-free connection was discovered in 1948 by S. S. Chern, although his connection is not metric compatible. For these aspects the interested reader should consult \cite{BCS}). 

As far as we are aware of, there has been no attempt so far in merging these two fascinating areas of geometry into a unified body. In this paper our starting point is the energy \eqref{Fen}, a prototypical model in which different norms are assigned on different layers of the stratification. A seemingly different situation was introduced in the Ph.D. thesis \cite{Sa}, and further developed in the subsequent papers \cite{PR}, \cite{FMRS}, \cite{GR}, \cite{GR2}, \cite{GPR} and \cite{GPPV}. These works consider an interesting generalisation of the notion of horizontal perimeter first set forth in \cite{cag}, but unlike the geometric case $p=1$ in \eqref{Fen} above, in the relevant ``energy" horizontal and vertical variables are inextricably mixed.


\section{Motivational background}\label{S:motback}

The purpose of this section is twofold: (i) to provide the reader with some perspective on the many roles of the Baouendi-Grushin operator \eqref{bg} in analysis and geometry; (ii) to stimulate further understanding of its Finsler generalisation \eqref{Fel}.
    
\subsection{The lifting theorem} We begin by recalling that in  the opening of their celebrated \emph{lifting} paper (see \cite[Example (b) on p.149]{RS}), Rothschild and Stein discuss the model differential operator in the plane with coordinates $(z,\sigma)$,
\begin{equation}\label{1}
\mathscr B u = \p_{zz} u + z^2 \p_{\sigma\sigma} u.
\end{equation}
It is clear that, letting $X_1 = \p_z, X_2 = z \p_\sigma$, one has $\mathscr B =  X_1^2  + X_2^2$, and that $X_1$ and $[X_1,X_2] = \p_\sigma$ span the tangent space. Although there exist no non-commutative nilpotent Lie groups of dimension two, one can remedy this by adding one extra variable $x$. In this way the vector fields $X_1$ and $X_2$ can be lifted to the three-dimensional Heisenberg group $\mathbb H^1$, with generators of the Lie algebra $\tilde X_1 = \p_z$ and $\tilde X_2 = \p_x + z\p_\sigma$. Since $[\tilde X_1,\tilde X_2] = \p_\sigma$, one immediately obtains the hypoellipticity of $\mathscr B$ in $\R^2$, from that of the horizontal Laplacian $\Delta_H = \tilde X_1^2 + \tilde X_2^2$ in $\R^3\cong \mathbb H^1$. The latter in fact follows from H\"ormander's famous theorem in \cite{Ho}, or more directly from Folland's explicit fundamental solution in \cite{Fo}.   
This provides a first glimpse of the close ties between the model operator $\mathscr B$ and the horizontal Laplacian $\Delta_H$ on the Heisenberg group. 

\subsection{Lie groups of Heisenberg type} In a different direction, consider a Lie group of Heisenberg type $\bG$. In the logarithmic coordinates $(z,\sigma)$ with respect to a fixed orthonormal basis of the Lie algebra, the horizontal Laplacian is given by
\begin{equation}\label{slH}
\Delta_H = \Delta_z + \frac{|z|^2}{4} \Delta_\sigma  + \sum_{\ell = 1}^k \p_{\sigma_\ell} \sum_{i<j} b^\ell_{ij} (z_i \p_{z_j} -  z_j \p_{z_i}),
\end{equation}
where $b^\ell_{ij}$ indicate the group constants given by $[e_i,e_j] = \sum_{\ell=1}^k b^\ell_{ij} \ve_\ell$, see e.g. \cite[Section 2.5]{Gems}. Here, $z = \sum_{i=1}^m z_i e_i \in \Rm$ represents the variable point in the horizontal layer of the Lie algebra, whereas $\sigma =\sum_{\ell = 1}^k \sigma_\ell \ve_\ell\in \R^k$ indicates the variable point in the center of $\bG$. 
 When $u(z,\sigma) = f(|z|,\sigma)$ is a function with cylindrical symmetry, then  because of the skew-symmetry of $b^\ell_{ij}$, we have $\sum_{i<j} b^\ell_{ij} (z_i \p_{z_j} -  z_j \p_{z_i}) u = 0$, and therefore the action of $\sul$ on such $u$ is given by  
\begin{equation}\label{2}
\sul u = \Delta_z u + \frac{|z|^2}4 \Delta_\sigma u.
\end{equation}
This provides another instance of the multi-faced link between horizontal Laplacians on nilpotent Lie groups of Heisenberg type and degenerate elliptic operators such as \eqref{1}, or its generalisation \eqref{2}. 

\subsection{Weakly pseudo-convex domains} A third connection comes from CR geometry. Operators such as \eqref{2} arise in the study of the boundary Cauchy-Riemann complex $\Box_b$. As it is well-known,  the Heisenberg group $\Hn$ can be naturally identified with the boundary of the Siegel upper half-space in $\mathbb C^{n+1}$, i.e., the domain
\[
\mathscr D^{n+1}_+ = \{z = (z_1,...,z_{n+1})\in \mathbb C^{n+1}\mid \Im(z_{1})>\sum_{j=2}^{n+1} |z_j|^2\},
\]
see \cite{FS74}. The domain $\mathscr D^{n+1}_+$ is strictly pseudo-convex, i.e. the Levi form on its boundary is  strictly positive definite. If for $p\in \mathbb N$, $p>1$, we consider instead the weakly pseudo-convex domain 
\[
\mathscr D_{p,+} = \{z = (z_1,z_{2})\in \mathbb C^{2}\mid \Im(z_{1})> |z_2|^{2p}\}
\]
(see p. 132 in \cite{DT} and forward for a computation of the Levi form),
 then $\p \mathscr D_{p,+}$ can no longer be endowed with a group structure. Up to a renormalisation factor, on functions which are ``radial" in the variable $z_2$, the sub-Laplacian on the boundary of   $\mathscr D_{p,+}$ is a special case of \eqref{bg} above.

 For instance, when $p=2$, then a fundamental solution of the sub-Laplacian on the boundary of $\mathscr D_{2,+}$ was found by Greiner in \cite[Theorem 7]{Gre}. When the pole is at a point $(0,\sigma')$, then such fundamental solution is given by the formula
\[
\G_2((z,\sigma),(0,\sigma')) = \frac{4}{\pi} (|z|^8 + (\sigma+\sigma')^2)^{-1/2}.
\]
In such case $m=2$, $k=1$, $\alpha = 3$, and the number defined by \eqref{Q} above is $Q  = 6$. The reader should note that, up to a rescaling factor, the function $\G_2$ coincides with the one given by the general formula  \eqref{Ga}. The geometric situation in \cite{Gre} was subsequently generalised by Beals, Gaveau and Greiner, who considered real hypersurfaces in $\mathbb C^2$ which include the boundaries of the pseudo-Siegel domains $\mathscr D_{p,+}$, with $p\in \mathbb N$, see \cite[Theorem 1.1]{BGG1}. In such general framework, one has $\alpha = 2p-1$, and therefore \eqref{Q}  presently gives $Q = 2p+2$. At a point $(0,\sigma')$, formula (1.19) of \cite{BGG1} gives the following fundamental solution 
\[
\G_p(z,\sigma) =  \frac{\pi^2}{4p} (|z|^{4p} + (\sigma+\sigma')^2)^{-1/2},
\]
which is again (up to a rescaling factor) a special case of \eqref{Ga}. 

\subsection{Free boundary problems} In this subsection we discuss the connection of \eqref{bg} with  free boundary problems for the fractional Laplacian. Consider the pseudodifferential operator in $\Rn$ defined on the Fourier transform side by the formula $\widehat{(-\Delta)^s u}(\xi) = (2\pi|\xi|)^{2s} \hat u(\xi)$. In their celebrated paper \cite{CaS} Caffarelli and Silvestre have shown that, when $0<s<1$, this nonlocal operator can be recovered as a weighted  Dirichlet-to-Neumann map of the following \emph{extension problem} in $\R^{n+1}_+ = \Rn_\sigma\times \R^+_y$
\begin{equation}\label{cs}
L_a \tilde U = \operatorname{div}_{(\sigma,y)}(y^a \nabla_{(\sigma,y)} \tilde U) = y^a(\Delta_\sigma \tilde U + \tilde U_{yy} + \frac ay \tilde U_y) = 0,\ \ \ \ \ \tilde U(\sigma,0) = u(\sigma),\ \ \ \ \ \sigma\in \Rn,
\end{equation}
with $a = 1-2s\in (-1,1)$. But, in fact, $(-\Delta)^s$ is also in a natural way the (unweighted) Dirichlet-to-Neumann map of an operator such as \eqref{bg} above. To see this, for a given function $\tilde U(\sigma,y)$, consider the change of variable $ U(\sigma,z) = \tilde U(\sigma,y)$, where $z>0$ and $y>0$ are given by the relation
\[
z = \left(\frac{y}{1-a}\right)^{1-a},\ \ \ \text{or equivalently},\ \ \ \ y = (1-a) z^{\frac{1}{1-a}}.
\]
Then the Caffarelli-Silvestre extension operator $L_a$ is connected to the Baouendi-Grushin operator \eqref{bg} by the following formula
\begin{equation}\label{labb2}
y^a L_a  \tilde U(\sigma,y) = D_{zz}  U + z^{2\alpha} \Delta_\sigma  U,
\end{equation}
where $\alpha = \frac{a}{1-a}$. Using the equation \eqref{labb2}, one can express the extension theorem in \cite{CaS} in the following equivalent fashion, see \cite[Proposition 11.2]{Gft}. Given $s\in (0,1)$, let $\alpha = \frac{1}{2s} -1\in (-\frac 12,\infty)$. If for $u\in \mathscr S(\Rn)$ one considers the solution $U(x,z)$ to the Dirichlet problem
\begin{equation}\label{bg00}
\begin{cases}
D_{zz} U(x,z) + z^{2\alpha}  \Delta_x U(x,z)  = 0,\ \ \ \ (x,z)\in \R^{n+1}_+,
\\
U(x,0) = u(x),
\end{cases}
\end{equation}
then one has
\begin{equation}\label{dtnbg}
(-\Delta)^s u(x) = - \frac{\G(1+s)}{\G(1-s)}\ \underset{z\to 0^+}{\lim} \frac{\p U}{\p z}(x,z).
\end{equation}
Using \eqref{labb2} and \eqref{bg00}, one can directly extract from the above cited monotonicity formula in \cite[Theorem 4.2]{G} the one for the operator $L_a$ in \cite[Theorem 3.1 (see Remark 3.2)]{CSS}.  

\subsection{Conformal geometry} 
A final motivating example for \eqref{bg} originates from conformal CR geometry. In the paper \cite{BFM} Branson, Fontana and Morpurgo introduced a  pseudodifferential operator $\mathscr L_{s}$ in the Heisenberg group $\Hn$ which is the correct geometric counterpart of the fractional powers $(-\Delta)^s$. We emphasise in this respect that the fractional powers $(-\Delta_H)^s$ of the Kohn-Spencer horizontal Laplacian in $\Hn$ are not conformal, and they have no geometric significance (for the definition of $\Delta_H$ in any group of Heisenberg type see \eqref{slH} above). In their work \cite{FGMT} Frank, del Mar Gonz\'alez,  Monticelli and Tan introduced and solved a CR counterpart for the operator $\mathscr L_{s}$ of the Caffarelli-Silvestre extension problem \eqref{cs} (but very different from it!), see also the works of Roncal and Thangavelu \cite{RT}, \cite{RT2} for a parabolic version of it. In the previously cited paper \cite{GTpotan}, the authors constructed the following explicit heat kernel for the Frank, del Mar Gonz\'alez, Monticelli and Tan extension problem in any group of Heisenberg type
\begin{align}\label{hk}
q_{(s)}((z,\sigma),t,y) & =  \frac{2^k}{(4\pi t)^{\frac{m}2 +k +1-s}} \int_{\R^k} e^{- \frac it \langle \sigma,\la\rangle}   \left(\frac{|\la|}{\sinh |\la|}\right)^{\frac m2+1-s}
\\
\notag
& \times e^{-\frac{|z|^2 +y^2}{4t}\frac{|\la|}{\tanh |\la|}} d\la.
\notag
\end{align}
The construction of \eqref{hk} ultimately hinged on the heat kernel (in a space with fractal dimension) for the case $\alpha = 1$ of the Baouendi-Grushin operator \eqref{bg}. In this respect,  
the reader should compare the ``dimension" $m+2k + 2(1-s)$ in the factor $t^{\frac{m}2 +k +1-s}$ in \eqref{hk}  with the case $\alpha =1$ of \eqref{Q} above. 



\section{The Anisotropic Legendre transformation and its dual}\label{S:aniBG}

As it is well-known, in Finsler geometry the classical Legendre transform of a Minkowski norm plays a central role, see \eqref{dual} below. Unfortunately, because of the different scalings in different directions of the tangent space, such transformation is not well-suited for the geometric framework of this paper.
As we have mentioned in Section \ref{S:intro}, when attempting to combine Finsler with sub-Riemannian geometry, the first question that arises is what is an appropriate notion of Legendre transformation for a non-isotropic Minkowski gauge on product spaces. 
In this section we introduce a new notion of Legendre transformation which is tailor-made to deal with this problem. We focus our attention on the model situation of a Euclidean space $\RN= \rr^m \times \rr^k$ with $m,k\ge1$, and $N = m+k$, but we emphasise that our results hold unchanged for product spaces with an arbitrary number of factors. Let $x=(z,\sigma) \in \RN$ where $z \in \rr^m$ and $\sigma \in \rr^k$. Given $\alpha >0$ we consider a family of anisotropic dilations $\{\delta_t\}_{t \in \rr^+}$ given by \eqref{dil} above.
Let $\Phi: \rr^m \to [0,+\infty)$ and $\Psi: \rr^k \to [0,\infty)$ be two Minkowski norms on  $\rr^m$ and $\rr^k$ respectively, see Section \ref{S:app} for the precise definition.

\begin{definition}\label{D:animink}
We define an \emph{anisotropic Minkowski gauge} on $\RN$ by the equation  \eqref{theta} above.
\end{definition}
It is worth observing right-away that $\Theta$ is positively one-homogeneous with respect to the anisotropic dilations \eqref{dil}, i.e., 
\[
\Theta(\delta_t(z,\sigma)) = t\ \Theta(z,\sigma).
\]
  In the next definition we introduce a new anisotropic Legendre transformation which, as Theorem \ref{T:halleyop}, Remark \ref{R:p2} and Theorem \ref{T:wow} show, displays some remarkable properties. Its first basic feature is given in Proposition \ref{P:newL} below.

\begin{definition}\label{D:newL} 
For a fixed point $(z,\sigma)\in \RN$, we define the \emph{anisotropic Legendre transformation} $\Theta^0$ of the gauge $\Theta$ by the equation \eqref{theta0}.
\end{definition}

We have the following basic result.

\begin{proposition}\label{P:newL}
The solution $\Theta^0$ of the constrained extremum problem \eqref{theta0} is identified by the equation 
\[
\Theta^0(z,\sigma)= \left(\Phi^0(z)^{2(\al +1)}+  4(\alpha+1)^2 \Psi^0(\sigma)^2  \right)^{\frac{1}{2(\al+1)}},
\]
where $\Phi^0$ and $\Psi^0$ respectively denote the standard Legendre transforms \eqref{dual0} of the Minkowski norms $\Phi$ and $\Psi$.
\end{proposition}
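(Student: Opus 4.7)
The plan is to reduce the constrained supremum in \eqref{theta0} to a single-variable optimization by first splitting the maximization into angular and radial parts on each factor $\rr^m$ and $\rr^k$, and then using the classical Lagrange approach on one remaining scalar parameter.

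First, I would parametrize the constraint $\Theta(\xi,\tau)=1$ by writing $\xi = a\hat\xi$ and $\tau = b\hat\tau$ with $\Phi(\hat\xi)=\Psi(\hat\tau)=1$, so that $a=\Phi(\xi)\in[0,1]$ and
\[
b = \Psi(\tau) = \frac{\sqrt{1-a^{2(\alpha+1)}}}{2(\alpha+1)}.
\]
Because $\hat\xi$ and $\hat\tau$ appear separately in the objective, the inner supremum over directions decouples, and the very definitions \eqref{dual0} of the classical Legendre transforms $\Phi^0$ and $\Psi^0$ give
\[
\sup_{\Phi(\hat\xi)=1}|\langle z,\hat\xi\rangle| = \Phi^0(z), \qquad \sup_{\Psi(\hat\tau)=1}\langle \sigma,\hat\tau\rangle = \Psi^0(\sigma).
\]
Thus the problem reduces to the one-dimensional maximization
\[
\Theta^0(z,\sigma)^{\alpha+1} = \sup_{a\in[0,1]}\Bigl\{a^{\alpha+1}A + 2(\alpha+1)\sqrt{1-a^{2(\alpha+1)}}\,B\Bigr\},
\]
where I set $A=\Phi^0(z)^{\alpha+1}$ and $B=\Psi^0(\sigma)$.

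Next I differentiate in $a$, set the derivative to zero, and solve for the critical $a$. A short computation gives $a^{2(\alpha+1)} = A^2/(A^2+4(\alpha+1)^2 B^2)$, whence
\[
a^{\alpha+1} = \frac{A}{\sqrt{A^2+4(\alpha+1)^2B^2}}, \qquad \sqrt{1-a^{2(\alpha+1)}} = \frac{2(\alpha+1)B}{\sqrt{A^2+4(\alpha+1)^2B^2}}.
\]
Substituting back into the objective collapses the two summands into a single square root, yielding
\[
\Theta^0(z,\sigma)^{\alpha+1} = \sqrt{A^2+4(\alpha+1)^2B^2} = \sqrt{\Phi^0(z)^{2(\alpha+1)}+4(\alpha+1)^2\Psi^0(\sigma)^2},
\]
which is exactly the claimed formula after taking the $(\alpha+1)$-th root.

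Finally I would verify that the interior critical point is indeed the global maximum, which follows because the objective is continuous on $[0,1]$, vanishes at $a=1$ on the $B$-term while vanishing at $a=0$ on the $A$-term, and the critical value already dominates both boundary evaluations $a=0$ (giving $2(\alpha+1)B$) and $a=1$ (giving $A$). The degenerate cases $z=0$ or $\sigma=0$ reduce immediately to a one-factor problem and match the formula. The only mildly delicate point is the handling of $|\langle z,\xi\rangle|^{\alpha+1}$ versus $\langle \sigma,\tau\rangle$: since we are taking a supremum, one may assume $\langle z,\hat\xi\rangle\ge 0$ without loss of generality, so the absolute value disappears and the two angular suprema genuinely decouple into the standard Legendre transforms; this is the step I would be most careful to justify cleanly.
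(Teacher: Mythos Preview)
Your argument is correct and takes a genuinely different route from the paper. The paper attacks \eqref{theta0} by writing the full Lagrange multiplier system in $\RN$ for the vector unknowns $(\xi,\tau)$, and then unwinds that system using the duality identities \eqref{Finabla} and Lemma~\ref{L:BP} (i.e., $N^0(x)\nabla N(\nabla N^0(x))=x$) applied to both $\Phi$ and $\Psi$; the classical transforms $\Phi^0,\Psi^0$ emerge only at the end after several manipulations. You instead separate variables from the outset: the angular maximizations on each factor are \emph{exactly} the definitions of $\Phi^0$ and $\Psi^0$, and what remains is a one-parameter problem that is elementary calculus---indeed, with $u=a^{\alpha+1}$ it is just the Cauchy--Schwarz inequality $uA+\sqrt{1-u^2}\cdot 2(\alpha+1)B\le\sqrt{A^2+4(\alpha+1)^2B^2}$ on the unit circle, so even the differentiation step is optional.

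Your approach is shorter and more transparent about \emph{why} the answer has the form it does; it also avoids any smoothness hypotheses on $\Phi,\Psi$ beyond what is needed for $\Phi^0,\Psi^0$ to make sense, since you never differentiate $\Phi$ or $\Psi$. The paper's approach, while heavier, showcases the role of the inversion identity in Lemma~\ref{L:BP}, which is thematically consistent with the later computations in Section~\ref{S:proof}. One small presentational point: the decoupling of the two angular suprema relies on the symmetry $\Phi(-\hat\xi)=\Phi(\hat\xi)$ and $\Psi(-\hat\tau)=\Psi(\hat\tau)$ built into the definition of a Minkowski norm in Section~\ref{S:app}; you allude to this in your last paragraph, and it would be worth stating explicitly, since it is what guarantees $\sup_{\Phi(\hat\xi)=1}|\langle z,\hat\xi\rangle|=\Phi^0(z)$ and allows the sign of $\langle\sigma,\hat\tau\rangle$ to be taken nonnegative.
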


\begin{proof}
Let $(\xi, \tau)$ be a constrained extremum point for the problem \eqref{theta0} under the constraint
\[
\Theta(z,\sigma)^{2(\alpha+1)} = \Phi(\xi)^{2(\alpha+1)}+ 4(\alpha+1)^2 \Psi(\tau)^2 = 1.
\]
By the method of Lagrange multipliers we must have at such point 
\begin{equation}
\label{eq:LMsystem}
\begin{cases}
|\escpr{z,\xi}|^{\al} \text{sign} \escpr{z,\xi}\ z= 2\ \la\ \Phi(\xi)^{2\al+1} \nabla_{\xi} \Phi(\xi),
\\
\sigma= 2\ \lambda\   \Psi(\tau) \nabla_{\tau} \Psi(\tau),
\\
\Phi(\xi)^{2(\alpha+1)}+ 4(\alpha+1)^2 \Psi(\tau)^2=1.
\end{cases}
\end{equation}
To understand \eqref{eq:LMsystem}, we first take the inner product with $\xi$ in the former equation, and with $\tau$ in the second one, and then we apply  \eqref{hom} to both $\Phi$ and $\Psi$. We obtain  
\begin{equation}\label{eq:LMSy2}
\begin{cases}
|\escpr{z,\xi}|^{\al+1} = 2 \la \Phi(\xi)^{2(\al+1)},
\\
\escpr{\sigma,\tau}= 2 \la  \Psi(\tau)^2,
 \\
\Phi(\xi)^{2(\alpha+1)}+ 4(\alpha+1)^2 \Psi(\tau)^2=1.
\end{cases}
\end{equation}
We now add the first line in the previous system to the second one multiplied by  $4(\alpha+1)^2$. Using the third line (the constraint), and keeping \eqref{theta0} in mind, we find 
\begin{equation}
\label{eq:la}
\Theta^0(z,\sigma)^{\alpha+1}=2 \la.
\end{equation}
Substitution of \eqref{eq:la} in the system \eqref{eq:LMsystem} yields
\begin{equation}
\label{eq:LMsystem2}
\begin{cases}
|\escpr{z,\xi}|^{\al} \text{sign} \escpr{z,\xi}\ z= \Theta^0(z,\sigma)^{\alpha+1}\ \Phi(\xi)^{2\al+1} \nabla_{\xi} \Phi(\xi),
\\
\sigma= \Theta^0(z,\sigma)^{\alpha+1}\   \Psi(\tau) \nabla_{\tau} \Psi(\tau),
\\
\Phi(\xi)^{2(\alpha+1)}+ 4(\alpha+1)^2 \Psi(\tau)^2=1.
\end{cases}
\end{equation}
We now apply $\Phi^0 \nabla \Phi^0$ to both sides of the first equation,  and $\Psi^0 \nabla \Psi^0$ to both sides of the second equation, obtaining
\begin{equation*}
\begin{cases}
|\escpr{z,\xi}|^{\al} \text{sign} \escpr{z,\xi}\  \Phi^0(z) \nabla \Phi^0(z)= \Theta^0(z,\sigma)^{\alpha+1} \Phi(\xi)^{2\al+1}\Phi^0(\nabla_\xi \Phi(\xi)) \nabla \Phi^0(\nabla \Phi(\xi)),
\\
 \Psi^0(\sigma) \nabla \Psi^0(\sigma)= \Theta^0(z,\sigma)^{\alpha+1} \Psi(\tau) \Psi^0(\nabla_\tau \Psi(\tau)) \nabla \Psi^0(\nabla \Psi(\tau)),
 \\
 \Phi(\xi)^{2(\alpha+1)}+ 4(\alpha+1)^2 \Psi(\tau)^2=1.
\end{cases}
\end{equation*}
Using \eqref{Finabla} and Lemma \ref{L:BP} for both $\Phi$ and $\Psi$, we thus find
\begin{equation*}
\begin{cases}
|\escpr{z,\xi}|^{\al} \text{sign} \escpr{z,\xi}\  \Phi^0(z) \nabla \Phi^0(z)= \Theta^0(z,\sigma)^{\alpha+1} \Phi(\xi)^{2\al}\ \xi,
\\
 \Psi^0(\sigma) \nabla \Psi^0(\sigma)= \Theta^0(z,\sigma)^{\alpha+1} \ \tau,
 \\
 \Phi(\xi)^{2(\alpha+1)}+ 4(\alpha+1)^2 \Psi(\tau)^2=1.
\end{cases}
\end{equation*}
In both sides of the first equation we now take the inner product with $z$, and we do the same with $\sigma$ in the second equation. By the homogeneity of $\Phi^0$, $\Psi^0$, and by Euler equation, we find 
\begin{equation*}
\begin{cases}
 \Phi^0(z)^2= \Theta^0(z,\sigma)^{\alpha+1} \Phi(\xi)^{2\al} |\escpr{z,\xi}|^{1-\alpha},
 \\
 \Psi^0(\sigma)^2= \Theta^0(z,\sigma)^{\alpha+1}  \escpr{\tau,\sigma},
 \\
 \Phi(\xi)^{2(\alpha+1)}+ 4(\alpha+1)^2 \Psi(\tau)^2=1.
\end{cases}
\end{equation*}
From equations \eqref{eq:LMSy2} and \eqref{eq:la}, we have
\begin{equation*}
\begin{cases}
 \Phi^0(z)^2= \Theta^0(z,\sigma)^2  \Phi(\xi)^2,
 \\
 \Psi^0(\sigma)^2= \Theta^0(z,\sigma)^{2(\alpha+1)}  \Psi(\tau)^2,
 \\
 \Phi(\xi)^{2(\alpha+1)}+ 4(\alpha+1)^2 \Psi(\tau)^2=1.
\end{cases}
\end{equation*}
Raising to the power $(\alpha+1)$ the first equation, adding the second equation multiplied by $4(\alpha+1)^2$, and using the constraint, we finally obtain
\[
\Theta^0(z,\sigma)= \left(\Phi^0(z)^{2(\al +1)}+  4(\alpha+1)^2 \Psi^0(\sigma)^2  \right)^{\frac{1}{2(\al+1)}},
\]
which gives the desired conclusion.

\end{proof}


\section{Proof of Theorems \ref{T:halleyop} and \ref{T:wow}}\label{S:proof}

In this section we prove Theorems \ref{T:halleyop} and \ref{T:wow}. As said right before the statement of Theorem \ref{T:halleyop}, we indicate with $\rho(z,\sigma) = \Theta^0(z,\sigma)$ the dual anisotropic Minkowski gauge \eqref{theta00}.
We begin with a key lemma which provides a form of \emph{eikonal} equation.

\begin{lemma}\label{L:en}
We have in $\RN\setminus\{0\}$
\[
\Phi(\nabla_z \rho)^2 + \frac{\Phi^0(z)^{2\alpha}}4 \Psi(\nabla_\sigma \rho)^2 = \left(\frac{\Phi^0(z)}{\rho}\right)^{2\alpha}.
\]
\end{lemma}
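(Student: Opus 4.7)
The strategy is a direct computation: differentiate the explicit formula for $\rho$ from Proposition \ref{P:newL}, and then invoke a standard Finsler duality identity to collapse $\Phi(\nabla\Phi^0(z))$ and $\Psi(\nabla\Psi^0(\sigma))$ to $1$.

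First, I would set $F(z,\sigma) = \Phi^0(z)^{2(\alpha+1)}+4(\alpha+1)^2\Psi^0(\sigma)^2$, so that $\rho = F^{1/(2(\alpha+1))}$. Applying the chain rule together with the one-homogeneity of $\Phi^0$ (which gives $\nabla_z\Phi^0(z)^{2(\alpha+1)} = 2(\alpha+1)\,\Phi^0(z)^{2\alpha+1}\nabla\Phi^0(z)$) yields
\[
\nabla_z\rho \;=\; \rho^{-(2\alpha+1)}\,\Phi^0(z)^{2\alpha+1}\,\nabla\Phi^0(z),
\]
and a parallel computation in the $\sigma$-variable gives
\[
\nabla_\sigma\rho \;=\; 4(\alpha+1)\,\rho^{-(2\alpha+1)}\,\Psi^0(\sigma)\,\nabla\Psi^0(\sigma).
\]

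Next, I would use the identities $\Phi(\nabla\Phi^0(z)) = 1$ and $\Psi(\nabla\Psi^0(\sigma)) = 1$, valid for any Minkowski norm and its Legendre dual (a standard fact from the Finsler appendix referenced in the paper, essentially equivalent to \eqref{Finabla} / Lemma \ref{L:BP} invoked in the proof of Proposition \ref{P:newL}). Combined with the one-homogeneity of $\Phi$ and $\Psi$, this gives
\[
\Phi(\nabla_z\rho)^2 = \rho^{-2(2\alpha+1)}\,\Phi^0(z)^{2(2\alpha+1)}, \qquad \Psi(\nabla_\sigma\rho)^2 = 16(\alpha+1)^2\,\rho^{-2(2\alpha+1)}\,\Psi^0(\sigma)^2.
\]

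Finally, I would assemble the two pieces, factoring out the common $\rho^{-2(2\alpha+1)}\Phi^0(z)^{2\alpha}$:
\[
\Phi(\nabla_z\rho)^2+\frac{\Phi^0(z)^{2\alpha}}{4}\Psi(\nabla_\sigma\rho)^2 = \rho^{-2(2\alpha+1)}\,\Phi^0(z)^{2\alpha}\Bigl[\Phi^0(z)^{2(\alpha+1)}+4(\alpha+1)^2\Psi^0(\sigma)^2\Bigr].
\]
The bracketed quantity is precisely $\rho^{2(\alpha+1)}$ by the definition of $\rho = \Theta^0$. Substituting and collecting exponents of $\rho$ yields $\rho^{-2\alpha}\Phi^0(z)^{2\alpha} = (\Phi^0(z)/\rho)^{2\alpha}$, which is the claim.

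The only non-routine ingredient is the duality identity $\Phi(\nabla\Phi^0) = 1$ (and its analogue for $\Psi$); everything else is careful bookkeeping of powers. Since this identity is already used implicitly in the proof of Proposition \ref{P:newL} above and appears in the appendix on Finsler norms, I expect the proof to be quite short in practice.
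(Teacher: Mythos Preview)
Your proposal is correct and follows essentially the same route as the paper: the paper introduces auxiliary one-variable functions $f(s)$ and $g(t)$ to organize the chain rule, obtaining the same gradient formulas \eqref{cr1}, then applies \eqref{Finabla} exactly as you do to reach \eqref{cr2}, and finishes with the identical algebraic factoring. The only cosmetic difference is the packaging of the chain rule step.
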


\begin{proof}
For $\sigma\in \R^k$ fixed, we consider the function
\begin{equation}\label{f}
f(s) = \left(s^{2(\alpha+1)} + 4(\alpha+1)^2 \Psi^0(\sigma)^2\right)^{\frac{1}{2(\alpha+1)}}.
\end{equation}
An elementary computation shows that
\begin{equation}\label{fder}
\begin{cases}
f'(s) = s^{2\alpha+1} f(s)^{-2\alpha-1},
\\
f''(s) = 4(2\alpha+1)(\alpha+1)^2 \Psi^0(\sigma)^2 s^{2\alpha} f(s)^{-4\alpha-3}.
\end{cases}
\end{equation}
Similarly, for $z\in \Rm$ fixed we consider the function
\begin{equation}\label{g}
g(t) = \left(\Phi^0(z)^{2(\alpha+1)} + 4(\alpha+1)^2\ t^2\right)^{\frac{1}{2(\alpha+1)}},
\end{equation}
and obtain
\begin{equation}\label{gder}
\begin{cases}
g'(t) = 4(\alpha+1) t\ g(t)^{-2\alpha-1},
\\
g''(t) = 4(\alpha+1)  \left(\Phi^0(z)^{2(\alpha+1)} - 4\alpha(\alpha+1) t^2\right) g(t)^{-4\alpha-3}.
\end{cases}
\end{equation}
Since
\[
\rho(z,\sigma) = f(\Phi^0(z)) = g(\Psi^0(\sigma)),
\]
the chain rule and \eqref{fder},\eqref{gder} thus give
\begin{equation}\label{cr1}
\nabla_z \rho = \Phi^0(z)^{2\alpha+1} \rho^{-2\alpha-1}\nabla_z \Phi^0(z),\ \ \ \ \ \ \nabla_\sigma \rho = 4(\alpha+1)\Psi^0(\sigma)\rho^{-2\alpha-1} \nabla_\sigma \Psi^0(\sigma).
\end{equation}
Applying, respectively, $\Phi$ and $\Psi$ to the latter equations, and using \eqref{Finabla}, we find 
\begin{equation}\label{cr2}
\Phi(\nabla_z \rho) = \Phi^0(z)^{2\alpha+1} \rho^{-2\alpha-1},\ \ \ \ \Psi(\nabla_\sigma \rho) = 4(\alpha+1)\Psi^0(\sigma)\rho^{-2\alpha-1}.
\end{equation}
This gives
\begin{align*}
& \Phi(\nabla_z \rho)^2 + \frac{\Phi^0(z)^{2\alpha}}4 \Psi(\nabla_\sigma \rho)^2 = \rho^{-4\alpha-2}\left\{\Phi^0(z)^{4\alpha+2}  + \frac{\Phi^0(z)^{2\alpha}}4 (4(\alpha+1))^2\Psi^0(\sigma)^2\right\}
\\
& = \Phi^0(z)^{2\alpha}\rho^{-4\alpha-2}\left\{\Phi^0(z)^{2(\alpha+1)} + 4(\alpha+1)^2 \Psi^0(\sigma)^2\right\} 
\\
& =  \Phi^0(z)^{2\alpha}\rho^{-4\alpha-2} \rho^{2(\alpha+1)} = \left(\frac{\Phi^0(z)}{\rho}\right)^{2\alpha},
\end{align*}
which completes the proof.

\end{proof}

\begin{remark}\label{R:hommanipadmeum}
We explicitly note that the factor $\left(\frac{\Phi^0(z)}{\rho}\right)^{2\alpha}$ in the right-hand side of the equation in Lemma \ref{L:en} has homogeneity zero with respect to the anisotropic dilations \eqref{dil}. 
\end{remark}

We will also need the following notable fact. To motivate it, we recall that if $r(x) = |x|$, with $x\in \Rn$, then it is well known that
\begin{equation}\label{curvatures}
\Delta r = \frac{n-1}r,
\end{equation}
and this equation plays a pervasive role both in pde's and geometry. Now, the operator $\mathscr L_{\alpha,2}$ defined by \eqref{Fel2} is strongly nonlinear. It is quite remarkable that, despite this aspect, its action on the anisotropic dual Finsler norm $\rho = \Theta^0$ is expressed by the following intrinsic replacement of \eqref{curvatures}, see also Remark \ref{R:hommanipadmeum}. 

\begin{lemma}\label{L:L2rho}
For the operator in \eqref{Fel2} we have in $\RN\setminus\{0\}$
\[
\mathscr L_{\alpha,2}(\rho) = \frac{Q-1}{\rho} \left(\frac{\Phi^0(z)}{\rho}\right)^{2\alpha},
\]
where $Q$ is as in \eqref{Q}.
\end{lemma}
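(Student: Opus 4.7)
The key observation is that, despite its strong nonlinearity, the operator $\mathscr L_{\alpha,2}$ splits as $\Delta_\Phi$ acting in the $z$-variable plus a multiple of $\Delta_\Psi$ acting in the $\sigma$-variable, and $\rho$ is a ``radial'' function of $\Phi^0(z)$ when $\sigma$ is frozen, and of $\Psi^0(\sigma)$ when $z$ is frozen. Concretely, with $f,g$ as in \eqref{f}, \eqref{g}, we have $\rho(z,\sigma)=f(\Phi^0(z))=g(\Psi^0(\sigma))$. Thus everything reduces to the Finsler ``radial Laplacian'' identity
\[
\Delta_\Phi(F\circ \Phi^0)=F''(\Phi^0)+\frac{m-1}{\Phi^0}F'(\Phi^0),\qquad \Delta_\Psi(G\circ \Psi^0)=G''(\Psi^0)+\frac{k-1}{\Psi^0}G'(\Psi^0),
\]
which is the Finsler analogue of $\Delta r=(n-1)/r$. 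This identity follows (and is presumably recorded in Section~\ref{S:app}) from the standard duality relations $\Phi(\nabla\Phi^0)=1$ and $\Phi(\nabla\Phi^0)\nabla\Phi(\nabla\Phi^0)=z/\Phi^0(z)$, combined with Euler's identity $\nabla\Phi^0(z)\cdot z=\Phi^0(z)$, exactly as in the computation leading to \eqref{cr1}--\eqref{cr2} in the proof of Lemma~\ref{L:en}.

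\textbf{Execution.} First I would apply the radial formula to the $z$-slice $s\mapsto f(s)$, using the derivatives in \eqref{fder}, to get
\[
\Delta_\Phi(\rho)=4(2\alpha+1)(\alpha+1)^2(\Psi^0(\sigma))^2(\Phi^0(z))^{2\alpha}\rho^{-4\alpha-3}+(m-1)(\Phi^0(z))^{2\alpha}\rho^{-2\alpha-1}.
\]
Next, applying the radial formula to the $\sigma$-slice $t\mapsto g(t)$, using the derivatives in \eqref{gder}, yields
\[
\Delta_\Psi(\rho)=4(\alpha+1)\bigl[(\Phi^0(z))^{2(\alpha+1)}-4\alpha(\alpha+1)(\Psi^0(\sigma))^2\bigr]\rho^{-4\alpha-3}+4(\alpha+1)(k-1)\rho^{-2\alpha-1}.
\]
I would then multiply $\Delta_\Psi(\rho)$ by $(\Phi^0(z))^{2\alpha}/4$ and add to $\Delta_\Phi(\rho)$. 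The $(\Psi^0)^2(\Phi^0)^{2\alpha}\rho^{-4\alpha-3}$ coefficients combine as $4(2\alpha+1)(\alpha+1)^2-4\alpha(\alpha+1)^2=4(\alpha+1)^3$, and together with the remaining $(\Phi^0)^{4\alpha+2}\rho^{-4\alpha-3}$ contribution one recognises the factor $(\Phi^0)^{2(\alpha+1)}+4(\alpha+1)^2(\Psi^0)^2=\rho^{2(\alpha+1)}$. This collapses the $\rho^{-4\alpha-3}$ terms into a single $(\alpha+1)(\Phi^0)^{2\alpha}\rho^{-2\alpha-1}$.

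\textbf{Final step.} Collecting also the two $\rho^{-2\alpha-1}$ contributions I get a coefficient $(\alpha+1)+(m-1)+(\alpha+1)(k-1)=m-1+(\alpha+1)k=Q-1$ by the definition \eqref{Q}, so
\[
\mathscr L_{\alpha,2}(\rho)=(Q-1)(\Phi^0(z))^{2\alpha}\rho^{-2\alpha-1}=\frac{Q-1}{\rho}\left(\frac{\Phi^0(z)}{\rho}\right)^{2\alpha},
\]
which is the desired identity. The only non-routine point is the radial Finsler Laplacian formula; once that is in hand (either by citing the appendix or reproving it in two lines from $\Phi(\nabla\Phi^0)\nabla\Phi(\nabla\Phi^0)=z/\Phi^0$ and $\operatorname{div}_z(z/\Phi^0(z))=(m-1)/\Phi^0(z)$), the rest is algebra that mirrors the structure already exploited in the proof of Lemma~\ref{L:en}.
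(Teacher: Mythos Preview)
Your proof is correct and follows essentially the same approach as the paper: both arguments invoke the radial Finsler Laplacian identity (Corollary~\ref{C:supernice}) to write $\Delta_\Phi(\rho)=f''(\Phi^0)+\frac{m-1}{\Phi^0}f'(\Phi^0)$ and $\Delta_\Psi(\rho)=g''(\Psi^0)+\frac{k-1}{\Psi^0}g'(\Psi^0)$, then combine using \eqref{fder}, \eqref{gder} and the relation $(\Phi^0)^{2(\alpha+1)}+4(\alpha+1)^2(\Psi^0)^2=\rho^{2(\alpha+1)}$. The only difference is cosmetic: the paper rewrites $f''$ and $g''$ in alternative forms before combining (obtaining the coefficient $Q+2\alpha$ and then subtracting $2\alpha+1$), whereas you keep the forms from \eqref{fder}, \eqref{gder} and regroup so that the coefficient $Q-1$ emerges directly; the underlying computation is the same.
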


\begin{proof}
According to \eqref{Fel2}, we have in $\RN\setminus\{0\}$
\[
\mathscr L_{\alpha,2}(\rho) = \Delta_{\Phi}(\rho) + \frac{\Phi^0(z)^{2\alpha}}4  \Delta_\Psi(\rho).
\]
We now consider the functions $f$ and $g$ introduced in \eqref{f} and \eqref{g}. From Corollary \eqref{C:supernice} we obtain
\[
\Delta_\Phi(\rho) = f''(\Phi^0) + \frac{m-1}{\Phi^0} f'(\Phi^0),\ \ \ \Delta_\Psi(\rho) = g''(\Psi^0) + \frac{k-1}{\Psi^0} g'(\Psi^0).
\]
Using \eqref{fder}, \eqref{gder} in the latter two equations, we find
\begin{equation}\label{Delphirho}
\Delta_\Phi(\rho) = (m+2\alpha)(\Phi^0)^{2\alpha} \rho^{-2\alpha-1}-(2\alpha+1)(\Phi^0)^{4\alpha+2}\rho^{-4\alpha-3},
\end{equation}
and
\begin{equation}\label{Delpsirho}
\Delta_\Psi(\rho) = 4(\alpha+1)\ k\ \rho^{-2\alpha-1}-(2\alpha+1)(4(\alpha+1))^2 (\Psi^0)^{2}\rho^{-4\alpha-3}.
\end{equation}
Combining \eqref{Delphirho} with \eqref{Delpsirho}, after some elementary computations we obtain
\begin{align*}
\mathscr L_{\alpha,2}(\rho) & = \frac{(\Phi^0)^{2\alpha}}{\rho^{2\alpha+1}} (m+2\alpha + (\alpha+1)k) 
 - (2\alpha+1) \frac{(\Phi^0)^{2\alpha}}{\rho^{4\alpha+3}} \left[(\Phi^0)^{2(\alpha +1)} + 4(\alpha+1)^2 (\Psi^0)^2\right].
\end{align*}
Since by \eqref{Q} we have $m+2\alpha + (\alpha+1)k = Q+2\alpha$, and since Proposition \ref{P:newL} gives $(\Phi^0)^{2(\alpha +1)} + 4(\alpha+1)^2 (\Psi^0)^2 = \rho^{2(\alpha+1)}$, we easily obtain the desired conclusion from the latter equation.

\end{proof}

We are ready to present the
 
\begin{proof}[Proof of Theorem \ref{T:halleyop}]

Given a function $F\in C^2(\R^+)$, we intend to compute the action of the operator in \eqref{Fel} on a function $u = F(\rho)$. Since
\[
\nabla_z u = F'(\rho) \nabla_z \rho,\ \ \ \ \ \nabla_\sigma u = F'(\rho) \nabla_\sigma \rho,
\]
we have from Lemma \ref{L:en} in $\RN\setminus\{0\}$
\begin{equation}\label{encomp}
\Phi(\nabla_z u)^2 + \frac{\Phi^0(z)^{2\alpha}}4 \Psi(\nabla_\sigma u)^2 = F'(\rho)^2 \left\{\Phi(\nabla_z \rho)^2 + \frac{\Phi^0(z)^{2\alpha}}4 \Psi(\nabla_\sigma \rho)^2\right\} = F'(\rho)^2 \left(\frac{\Phi^0(z)}{\rho}\right)^{2\alpha}.
\end{equation}
On the other hand, \eqref{A} gives
\begin{equation}\label{Arho}
\mathscr A(\nabla_{(z,\sigma)} u) = F'(\rho) \begin{pmatrix} \Phi(\nabla_z \rho) \nabla \Phi(\nabla_z \rho)
\\
\frac{\Phi^0(z)^{2\alpha}}4 \Psi(\nabla_\sigma \rho) \nabla \Psi(\nabla_\sigma \rho)
\end{pmatrix}.
\end{equation}
Using \eqref{encomp} and \eqref{Arho} in \eqref{Fel}, we obtain
{\allowdisplaybreaks
\begin{align*}
\mathscr L_{\alpha,p}(u) & = \operatorname{div}_{(z,\sigma)} \left(F'(\rho) |F'(\rho)|^{p-2} \left(\frac{\Phi^0(z)}{\rho_\alpha}\right)^{\alpha(p-2)} \mathscr A(\nabla_{(z,\sigma)} \rho)\right)
\\
& = \operatorname{div}_{z} \left(F'(\rho) |F'(\rho)|^{p-2} \left(\frac{\Phi^0(z)}{\rho_\alpha}\right)^{\alpha(p-2)} \Phi(\nabla_z \rho) \nabla \Phi(\nabla_z \rho)\right)
\\
& + \operatorname{div}_{\sigma} \left(F'(\rho) |F'(\rho)|^{p-2} \left(\frac{\Phi^0(z)}{\rho_\alpha}\right)^{\alpha (p-2)} \frac{\Phi^0(z)^{2\alpha}}4 \Psi(\nabla_\sigma \rho) \nabla \Psi(\nabla_\sigma \rho)\right)
\\
& = F'(\rho) |F'(\rho)|^{p-2} \left(\frac{\Phi^0(z)}{\rho}\right)^{\alpha(p-2)} \left\{ \Delta_{\Phi}(\rho) + \frac{\Phi^0(z)^{2\alpha}}4 \Delta_\Psi(\rho)\right\}
\\
& + \Phi(\nabla_z \rho) \left\sa\nabla_z \Phi(\nabla_z \rho),\nabla_z \left(F'(\rho) |F'(\rho)|^{p-2} \left(\frac{\Phi^0(z)}{\rho}\right)^{\alpha(p-2)}\right)\right\da
\\
& + \frac{\Phi^0(z)^{2\alpha}}4 \Psi(\nabla_\sigma \rho) \left\sa\nabla \Psi(\nabla_\sigma \rho), \nabla_\sigma \left(F'(\rho) |F'(\rho)|^{p-2} \left(\frac{\Phi^0(z)}{\rho}\right)^{\alpha (p-2)}\right) \right\da.
\end{align*}}
Using Lemma \ref{L:L2rho}, we have in $\RN\setminus\{0\}$
\begin{align}\label{un}
& F'(\rho) |F'(\rho)|^{p-2} \left(\frac{\Phi^0(z)}{\rho}\right)^{\alpha(p-2)} \left\{ \Delta_{\Phi}(\rho) + \frac{\Phi^0(z)^{2\alpha}}4 \Delta_\Psi(\rho)\right\}
\\
& = \frac{Q-1}{\rho}  F'(\rho) |F'(\rho)|^{p-2} \left(\frac{\Phi^0(z)}{\rho}\right)^{\alpha p}.
\notag
\end{align}
Next, in the above expression of $\mathscr L_{\alpha,p}(u)$ we evaluate the term 
\begin{align*}
& \left(\frac{\Phi^0(z)}{\rho}\right)^{\alpha (p-2)}\bigg\{\Phi(\nabla_z \rho) \sa\nabla \Phi(\nabla_z \rho),\nabla_z \left(F'(\rho) |F'(\rho)|^{p-2}\right)\da
\\
& + \frac{\Phi^0(z)^{2\alpha}}4 \Psi(\nabla_\sigma \rho) \sa\nabla \Psi(\nabla_\sigma \rho), \nabla_\sigma \left(F'(\rho) |F'(\rho)|^{p-2} \right) \da\bigg\}
\\
& = (p-1) |F'(\rho)|^{p-2} F''(\rho) \left(\frac{\Phi^0(z)}{\rho}\right)^{\alpha (p-2)}\bigg\{\Phi(\nabla_z \rho) \sa\nabla \Phi(\nabla_z \rho),\nabla_z \rho\da
\\
& +  \frac{\Phi^0(z)^{2\alpha}}4 \Psi(\nabla_\sigma \rho) \sa\nabla \Psi(\nabla_\sigma \rho), \nabla_\sigma \rho \da\bigg\}
\\
& = (p-1) |F'(\rho)|^{p-2} F''(\rho) \left(\frac{\Phi^0(z)}{\rho}\right)^{\alpha (p-2)}\bigg\{\Phi(\nabla_z \rho)^2 +   \frac{\Phi^0(z)^{2\alpha}}4 \Psi(\nabla_\sigma \rho)^2\bigg\}.
\end{align*}
Applying Lemma \ref{L:en}, we conclude that 
\begin{align}\label{due}
& \left(\frac{\Phi^0(z)}{\rho}\right)^{\alpha (p-2)}\bigg\{\Phi(\nabla_z \rho) \sa\nabla \Phi(\nabla_z \rho),\nabla_z \left(F'(\rho) |F'(\rho)|^{p-2}\right)\da
\\
& + \frac{\Phi^0(z)^{2\alpha}}4 \Psi(\nabla_\sigma \rho) \sa\nabla \Psi(\nabla_\sigma \rho), \nabla_\sigma \left(F'(\rho) |F'(\rho)|^{p-2} \right) \da\bigg\}
\notag\\
& = (p-1) |F'(\rho)|^{p-2} F''(\rho) \left(\frac{\Phi^0(z)}{\rho}\right)^{\alpha p}.
\notag
\end{align}
Finally, we analyse the remaining term in the expression of $\mathscr L_{\alpha,p}(u)$. We make the following claim:
\begin{align}\label{tre}
& \Phi(\nabla_z \rho) \left\sa\nabla_z \Phi(\nabla_z \rho),\nabla_z \left(\frac{\Phi^0(z)}{\rho}\right)^{\alpha(p-2)}\right\da
\\
& + \frac{\Phi^0(z)^{2\alpha}}4 \Psi(\nabla_\sigma \rho) \left\sa\nabla \Psi(\nabla_\sigma \rho), \nabla_\sigma \left(\frac{\Phi^0(z)}{\rho}\right)^{\alpha (p-2)}\right\da\ = \ 0.
\notag
\end{align}
If for a moment we take the claim for granted, it should be clear to the reader that, by combining \eqref{un}, \eqref{due}, \eqref{tre}, we obtain
\begin{align*}
\mathscr L_{\alpha,p}(u) & = \frac{Q-1}{\rho}  F'(\rho) |F'(\rho)|^{p-2} \left(\frac{\Phi^0(z)}{\rho}\right)^{\alpha p} + (p-1) |F'(\rho)|^{p-2} F''(\rho) \left(\frac{\Phi^0(z)}{\rho}\right)^{\alpha p}
\\
& = (p-1) |F'(\rho)|^{p-2} \left\{F''(\rho) + \frac{Q-1}{p-1}\frac{F'(\rho)}{\rho}\right\}\left(\frac{\Phi^0(z)}{\rho}\right)^{\alpha p},
\end{align*}
which would  complete the proof of the theorem. We are thus left with proving \eqref{tre}. With this in mind, we have 
{\allowdisplaybreaks
\begin{align*}
& \Phi(\nabla_z \rho) \left\sa\nabla_z \Phi(\nabla_z \rho),\nabla_z \left(\frac{\Phi^0(z)}{\rho}\right)^{\alpha(p-2)}\right\da
\\
& + \frac{\Phi^0(z)^{2\alpha}}4 \Psi(\nabla_\sigma \rho) \left\sa\nabla \Psi(\nabla_\sigma \rho), \nabla_\sigma \left(\frac{\Phi^0(z)}{\rho}\right)^{\alpha (p-2)}\right\da
\\
& = \Phi^0(z)^{\alpha (p-2)}\bigg\{\Phi(\nabla_z \rho) \sa\nabla_z \Phi(\nabla_z \rho),\nabla_z \rho^{\alpha(2-p)}\da
\\
& +  \frac{\Phi^0(z)^{2\alpha}}4 \Psi(\nabla_\sigma \rho) \sa\nabla \Psi(\nabla_\sigma \rho), \nabla_\sigma \rho^{\alpha (2-p)}\da\bigg\}
\\
& + \alpha(p-2)\rho^{\alpha(2-p)} \Phi^0(z)^{\alpha(p-2)-1} \Phi(\nabla_z \rho) \sa\nabla_z \Phi(\nabla_z \rho),\nabla_z \Phi^0(z)\da
\\
& = - \alpha(p-2)\rho^{\alpha(2-p)-1} \Phi^0(z)^{\alpha (p-2)} \bigg\{\Phi(\nabla_z \rho)^2 +   \frac{\Phi^0(z)^{2\alpha}}4 \Psi(\nabla_\sigma \rho)^2\bigg\}
\\
& + \alpha(p-2)\rho^{\alpha(2-p)} \Phi^0(z)^{\alpha(p-2)-1} \rho^{-2\alpha-1} \Phi^0(z)^{2\alpha+1} \sa\nabla_z \Phi(\nabla_z \Phi^0(z)),\nabla_z \Phi^0(z)\da
\\
& = - \alpha(p-2)\rho^{\alpha(2-p)-1} \Phi^0(z)^{\alpha (p-2)} \left(\frac{\Phi^0(z)}{\rho}\right)^{2\alpha} 
+ \frac{\alpha(p-2)}{\rho}\left(\frac{\Phi^0(z)}{\rho}\right)^{\alpha p}
\\
& = - \frac{\alpha(p-2)}{\rho}\left(\frac{\Phi^0(z)}{\rho}\right)^{\alpha p} + \frac{\alpha(p-2)}{\rho}\left(\frac{\Phi^0(z)}{\rho}\right)^{\alpha p}\ =\ 0, 
\end{align*}}
where in the third to the last equality we have used \eqref{cr1}, \eqref{cr2}, and in the second to the last equality we have used Lemma \ref{L:en} and the fact that $\sa\nabla_z \Phi(\nabla_z \Phi^0(z)),\nabla_z \Phi^0(z)\da = \Phi(\nabla_z \Phi^0(z)) = 1$, by Euler formula and Lemma \ref{L:en} again. This proves \eqref{tre}, and we are done.

\end{proof}

We next give the 

\begin{proof}[Proof of Theorem \ref{T:wow}]

Keeping \eqref{Fel} in mind, we intend to show that, for any $\vf\in C^\infty_0(\RN)$, we have
\begin{equation}\label{zab}
\int_{\RN}  \left(\Phi(\nabla_z \mathscr G_{\alpha,p})^2 + \frac{\Phi^0(z)^{2\alpha}}4 \Psi(\nabla_\sigma \mathscr G_{\alpha,p})^2\right)^{\frac{p-2}2} \sa \mathscr A(\nabla_{(z,\sigma)} \mathscr G_{\alpha,p}),\nabla_{(z,\sigma)} \vf\da dzd\sigma\ =\ \vf(0),
\end{equation}
where in view of \eqref{A} we have
\[
\mathscr A(\nabla_{(z,\sigma)} \mathscr G_{\alpha,p}) = \begin{pmatrix} \Phi(\nabla_z \mathscr G_{\alpha,p}) \nabla \Phi(\nabla_z \mathscr G_{\alpha,p})
\\
\frac{\Phi^0(z)^{2\alpha}}4 \Psi(\nabla_\sigma \mathscr G_{\alpha,p}) \nabla \Psi(\nabla_\sigma \mathscr G_{\alpha,p})
\end{pmatrix}.
\]
We observe explicitly that the function $\mathscr G_{\alpha,p}\notin L^1_{loc}(\RN)$, unless $p>\frac{2Q}{Q+1}$. However, in order to make sense of \eqref{zab} we do not need this integrability, but rather that 
\[
\left(\Phi(\nabla_z \mathscr G_{\alpha,p})^2 + \frac{\Phi^0(z)^{2\alpha}}4 \Psi(\nabla_\sigma \mathscr G_{\alpha,p})^2\right)^{\frac{p-2}2} \left|\mathscr A(\nabla_{(z,\sigma)} \mathscr G_{\alpha,p})\right|\ \in\ L^1_{loc}(\RN).
\]
This is guaranteed by the fact that
\[
\left(\Phi(\nabla_z \mathscr G_{\alpha,p})^2 + \frac{\Phi^0(z)^{2\alpha}}4 \Psi(\nabla_\sigma \mathscr G_{\alpha,p})^2\right)^{\frac{p-2}2} \left|\mathscr A(\nabla_{(z,\sigma)} \mathscr G_{\alpha,p})\right|\ \cong\ \rho^{1-Q}.
\]
Going back to \eqref{zab}, we have 
\begin{align*}
& \int_{\RN}  \left(\Phi(\nabla_z \mathscr G_{\alpha,p})^2 + \frac{\Phi^0(z)^{2\alpha}}4 \Psi(\nabla_\sigma \mathscr G_{\alpha,p})^2\right)^{\frac{p-2}2} \sa \mathscr A(\nabla_{(z,\sigma)} \mathscr G_{\alpha,p}),\nabla_{(z,\sigma)} \vf\da dzd\sigma
\\
& =  \underset{\ve\to 0^+}{\lim}\ \int_{\{\rho>\ve\}}  \left(\Phi(\nabla_z \mathscr G_{\alpha,p})^2 + \frac{\Phi^0(z)^{2\alpha}}4 \Psi(\nabla_\sigma \mathscr G_{\alpha,p})^2\right)^{\frac{p-2}2} \sa \mathscr A(\nabla_{(z,\sigma)} \mathscr G_{\alpha,p}),\nabla_{(z,\sigma)} \vf\da dzd\sigma
\\
& = \underset{\ve\to 0^+}{\lim}\ \int_{\{\rho=\ve\}}  \left(\Phi(\nabla_z \mathscr G_{\alpha,p})^2 + \frac{\Phi^0(z)^{2\alpha}}4 \Psi(\nabla_\sigma \mathscr G_{\alpha,p})^2\right)^{\frac{p-2}2} \sa \mathscr A(\nabla_{(z,\sigma)} \mathscr G_{\alpha,p}),\nu\da\ \vf\  dH_{N-1},
\end{align*}
where in the last equality we have used the divergence theorem, Theorem \ref{T:halleyop}, and the fact that $\vf$ is compactly supported. In the last integral, $\nu$ indicates the outer unit normal to the anisotropic Wulff sphere $\{\rho=\ve\}$, and $dH_{N-1}$ the $(N-1)$-dimensional Hausdorff measure in $\RN$ restricted to such sphere.
To continue our discussion, we now assume that $1<p<\infty$, but $p\not= Q$. The case $p = Q$, is dealt with similarly, and we leave the relevant details to the interested reader. If we write
$\mathscr G_{\alpha,p} = C \rho^{-\frac{Q-p}{p-1}}$, with $C>0$ to be determined, by Lemma \ref{L:en} a computation gives 
\[
\left(\Phi(\nabla_z \mathscr G_{\alpha,p})^2 + \frac{\Phi^0(z)^{2\alpha}}4 \Psi(\nabla_\sigma \mathscr G_{\alpha,p})^2\right)^{\frac{p-2}2} = C^{p-2} \left(\frac{Q-p}{p-1}\right)^{p-2} \rho^{\frac{(1-Q)(p-2)}{p-1}} \left(\frac{\Phi^0}{\rho}\right)^{\alpha(p-2)}.
\]
Since on $\{\rho=\ve\}$ we have
\[
\nu = - \frac{\nabla_{(z,\sigma)} \rho}{|\nabla_{(z,\sigma)} \rho|},
\]
another computation gives
\[
\sa \mathscr A(\nabla_{(z,\sigma)} \mathscr G_{\alpha,p}),\nu\da = C \frac{Q-p}{p-1} \rho^{\frac{1-Q}{p-1}} \left(\frac{\Phi^0}{\rho}\right)^{2\alpha}\frac{1}{|\nabla_{(z,\sigma)} \rho|}.
\]
Combining quantities, we obtain
\begin{align*}
& \int_{\{\rho=\ve\}}  \left(\Phi(\nabla_z \mathscr G_{\alpha,p})^2 + \frac{\Phi^0(z)^{2\alpha}}4 \Psi(\nabla_\sigma \mathscr G_{\alpha,p})^2\right)^{\frac{p-2}2} \sa \mathscr A(\nabla_{(z,\sigma)} \mathscr G_{\alpha,p}),\nu\da\ \vf\  dH_{N-1} 
\\
& = C^{p-1} \left(\frac{Q-p}{p-1}\right)^{p-1} \ve^{1-Q} \int_{\{\rho=\ve\}}  \vf\  \left(\frac{\Phi^0}{\rho}\right)^{\alpha(p-2)} \frac{dH_{N-1}}{|\nabla_{(z,\sigma)} \rho|}\ \underset{\ve\to 0^+}{\longrightarrow}\ \vf(0),
\end{align*}
provided that
\begin{equation}\label{C}
C = \sigma_{\alpha,p}^{-\frac{1}{p-1}} \frac{p-1}{Q-p},
\end{equation}
where
\begin{equation}\label{sig}
\sigma_{\alpha,p} = \int_{\{\rho=1\}}\left(\frac{\Phi^0}{\rho}\right)^{\alpha p} \frac{dH_{N-1}}{|\nabla_{(z,\sigma)} \rho|}
\end{equation}

\end{proof}


\section{Appendix: known facts}\label{S:app}

Let $N:\Rn\to [0,\infty)$ be a Minkowski norm in $\Rn$. By this we mean that $N^2\in \C^{2}(\Rn\setminus\{0\})$ is a strictly convex function such that $N(\la x) = |\la| N(x)$ for every $x\in \Rn$ and $\la\in \R$. Since all norms in $\Rn$ are equivalent, there exist constants $\beta \ge \alpha>0$ such that
\begin{equation}\label{equinorm}
\alpha |\xi| \le N(\xi) \le \beta |\xi|.
\end{equation} 
We denote by  
\begin{equation}\label{dual}
N^0(x) = \underset{N(\xi)=1}{\sup}\ \sa x,\xi\da,
\end{equation}
its Legendre transform, also known as the dual norm of $N$. The Cauchy-Schwarz inequality holds 
\begin{equation}\label{CS}
|\sa x,y\da| \le N(x) N^0(y).
\end{equation}
A basic property of the norms $N$ and $N^0$ is the following, see \cite[Lemma 2.1]{BP}, and also (3.12) in \cite{CS},
\begin{equation}\label{Finabla}
N(\nabla N^0(x)) = N^0(\nabla N(x))  = 1,\ \ \ \ \ \ \ \ x\in \Rn\setminus\{0\}.
\end{equation}
This identity says  that for any $x\in \Rn\setminus\{0\}$ the vector $\nabla \Phi^0(x)\in \p K = S_1$, and that equivalently $\nabla \Phi(x)\in \p K^0 = S^0_1$. Given a function $u\in C^1(\Rn)$, an elementary, yet useful,  consequence of the homogeneity of $\Phi$ is
\begin{equation}\label{hom}
\sa \nabla N(\nabla u(x)),\nabla u(x)\da = N(\nabla u(x)).
\end{equation}
A less obvious but basic fact is the following formula, which can be found in \cite[Lemma 2.2]{BP}.

\begin{lemma}\label{L:BP}
For every $x\in \Rn\setminus\{0\}$, one has
\[
N^0(x) \nabla N(\nabla N^0(x)) = x,\ \ \ \ \ \ N(x) \nabla N^0(\nabla N(x)) = x. 
\]
\end{lemma}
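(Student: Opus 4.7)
The plan is to extract both identities from a Lagrange multiplier analysis of the variational definition \eqref{dual}. Fix $x \in \Rn\setminus\{0\}$. Since $\{N=1\}$ is compact and $\xi \mapsto \sa x,\xi\da$ is continuous, the supremum in \eqref{dual} is attained at some point $\xi = \xi(x)$, and $N^0(x) = \sa x,\xi(x)\da$. Because $N^2 \in C^2(\Rn\setminus\{0\})$ and the gradient of the constraint $N(\xi)=1$ is nonzero on this level set, the Lagrange multiplier rule yields a scalar $\la$ with
\[
x = \la\ \nabla N(\xi(x)).
\]
Taking the inner product with $\xi(x)$ and using Euler's identity \eqref{hom} (i.e., $\sa \nabla N(\xi),\xi\da = N(\xi) = 1$) gives $\la = \sa x,\xi(x)\da = N^0(x)$. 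Therefore
\begin{equation}\label{pf:key}
x = N^0(x)\ \nabla N(\xi(x)).
\end{equation}

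Next I would identify $\xi(x)$ with $\nabla N^0(x)$. Differentiating the constraint $N(\xi(x))=1$ in $x$ yields $\sa \nabla N(\xi(x)), D\xi(x)\cdot v\da = 0$ for every direction $v$, which when combined with \eqref{pf:key} gives $\sa x, D\xi(x)\cdot v\da = 0$. Hence, differentiating the identity $N^0(x) = \sa x,\xi(x)\da$ and using this orthogonality (an envelope-type argument),
\[
\nabla N^0(x)\cdot v = \sa v,\xi(x)\da + \sa x, D\xi(x)\cdot v\da = \sa v,\xi(x)\da,
\]
so $\nabla N^0(x) = \xi(x)$. Substituting into \eqref{pf:key} produces the first identity $N^0(x)\nabla N(\nabla N^0(x)) = x$.

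For the second identity, I would invoke the bidual property $(N^0)^0 = N$, which holds because $N$ is a strictly convex Minkowski norm; this can be proved in the same spirit by showing that the double Legendre transform recovers $N$, either via the support-function identification of the unit balls or by a direct argument using \eqref{CS} together with the case of equality. Once $(N^0)^0=N$ is in hand, applying the first identity with $N$ and $N^0$ exchanged immediately yields $N(x)\nabla N^0(\nabla N(x)) = x$.

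The only real obstacle is the envelope-type step that shows $\nabla N^0(x) = \xi(x)$; it requires the $C^1$ dependence of the maximiser $\xi(x)$ on $x$, which follows from the implicit function theorem applied to the system \eqref{pf:key} combined with $N(\xi)=1$, using the strict convexity of $N^2$ (so that the bordered Hessian of the Lagrangian is nonsingular). Everything else is a direct consequence of homogeneity, Euler's identity, and duality, and the final formulas can also be cross-checked against \eqref{Finabla}, since $N(\nabla N^0(x)) = N(\xi(x)) = 1$ is automatic from $\xi(x)\in\{N=1\}$.
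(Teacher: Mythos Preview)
The paper does not prove this lemma; it is quoted from \cite[Lemma~2.2]{BP} as a known fact in the appendix, so there is no in-paper argument to compare against. Your proposal is correct and self-contained. The Lagrange-multiplier step together with Euler's identity \eqref{hom} yields $x=N^0(x)\,\nabla N(\xi(x))$ for the maximiser $\xi(x)$ on $\{N=1\}$, and your envelope argument identifying $\xi(x)=\nabla N^0(x)$ is sound once the $C^1$ dependence of $\xi(x)$ on $x$ is secured via the implicit function theorem; strict convexity of $N^2$ does make the relevant bordered Hessian nondegenerate, and uniqueness of the maximiser (also from strict convexity) makes $\xi(x)$ well defined to begin with. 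The appeal to $(N^0)^0=N$ for the second identity is legitimate.

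A slightly more direct route, which bypasses both the envelope differentiation and the implicit function theorem, is to recall that $\tfrac12 (N^0)^2$ is the Legendre--Fenchel conjugate of the strictly convex $C^2$ function $\tfrac12 N^2$, so the gradient maps $x\mapsto N(x)\nabla N(x)$ and $y\mapsto N^0(y)\nabla N^0(y)$ are mutual inverses. Writing this out and using positive $0$-homogeneity of $\nabla N$, $\nabla N^0$ together with \eqref{Finabla} produces both identities at once. Either approach is acceptable.
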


Consider the energy
\begin{equation}\label{finen}
\mathscr E_{N}(u) = \frac 12 \int N(\nabla u)^2 dx.
\end{equation}
The Euler-Lagrange equation of \eqref{finen} is the so-called Finsler Laplacian
\begin{equation}\label{EL}
\Delta_N(u) = \operatorname{div}(N(\nabla u)\nabla N(\nabla u)) = 0.
\end{equation}
It is worth emphasising here that the operator in \eqref{EL} is quasilinear, but not linear, unless of course $N(x) = |x|$. However, the operator $\Delta_N$ is elliptic. In fact, since $N^2$ is homogeneous of degree $2$, Euler formula gives
\[
\sa\nabla(N^2)(\xi),\xi\da = N(\xi)^2,
\]
and from \eqref{equinorm} we thus have for every $\xi\in \Rn$
\[
\alpha^2 |\xi|^2 \le \sa\nabla(N^2)(\xi),\xi \da \le \beta^2 |\xi|^2.
\]
Recall that if a function $\Psi:\Rn\setminus\{0\}\to \R$ satisfies for some $\kappa\in \R$ the homogeneity condition
\[
\Psi(\la x) = |\la|^\kappa \Psi(x),\ \ \ \ \ \ \la\not= 0,
\]
then for its gradient one has
\[
\nabla \Psi(\la x) = \frac{|\la|}{\la} |\la|^{\kappa-1} \nabla \Psi(x).
\]  
With this observation in mind, we have the following useful chain rule.

\begin{lemma}\label{L:chain}
Let $u\in C^2(\Rn)$, $h\in C^2(\R)$. Then
\[
\Delta_N(h\circ u) = h'(u) \Delta_N(u) + h''(u) N(\nabla u)^2.
\]
\end{lemma}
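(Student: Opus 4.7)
The plan is straightforward computation via the product rule on the divergence, once the two homogeneity properties of $N$ and $\nabla N$ have been invoked to simplify $N(\nabla(h\circ u))\nabla N(\nabla(h\circ u))$. Specifically, I would first write $\nabla(h\circ u)=h'(u)\nabla u$ and then use the homogeneity $N(\lambda \xi)=|\lambda|N(\xi)$ together with the $0$-homogeneity of $\nabla N$ (explicitly noted in the observation immediately preceding the lemma, which gives $\nabla N(\lambda \xi)=\operatorname{sign}(\lambda)\nabla N(\xi)$ for $\lambda\neq 0$) to produce
\[
N(\nabla(h\circ u))\,\nabla N(\nabla(h\circ u)) \;=\; |h'(u)|N(\nabla u)\cdot \operatorname{sign}(h'(u))\,\nabla N(\nabla u) \;=\; h'(u)\,N(\nabla u)\,\nabla N(\nabla u).
\]
The key cosmetic point here is that the apparent absolute values coming from $N$ and the sign coming from $\nabla N$ multiply to produce the clean factor $h'(u)$, with no assumption on the sign of $h'$.

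Next I would apply the product rule for the divergence:
\[
\Delta_N(h\circ u)=\operatorname{div}\bigl(h'(u)\,N(\nabla u)\nabla N(\nabla u)\bigr)= h'(u)\,\operatorname{div}\bigl(N(\nabla u)\nabla N(\nabla u)\bigr)+\bigl\langle \nabla(h'(u)),\,N(\nabla u)\nabla N(\nabla u)\bigr\rangle.
\]
The first summand is exactly $h'(u)\,\Delta_N(u)$ by the definition \eqref{EL}. For the second summand, $\nabla(h'(u))=h''(u)\nabla u$, so it becomes
\[
h''(u)\,N(\nabla u)\,\bigl\langle \nabla u,\,\nabla N(\nabla u)\bigr\rangle.
\]
At this point I would invoke the Euler identity \eqref{hom}, $\langle \nabla N(\nabla u),\nabla u\rangle=N(\nabla u)$, which is valid pointwise wherever $\nabla u\neq 0$; this collapses the inner-product factor to $N(\nabla u)$ and yields the claimed $h''(u)\,N(\nabla u)^2$.

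I do not anticipate a genuine obstacle; the only subtlety is bookkeeping the sign when $h'(u)$ changes sign, which is handled uniformly by the homogeneities described above. At points where $\nabla u=0$ the two sides of the identity both vanish (the right side directly, the left by interpreting the expression $N(\nabla u)\nabla N(\nabla u)$ as continuously extended by $0$, consistent with the quasilinear ellipticity of $\Delta_N$), so the equality holds everywhere in $\Rn$.
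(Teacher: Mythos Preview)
Your argument is correct and is exactly the computation the paper has in mind: the sentence ``With this observation in mind, we have the following useful chain rule'' refers precisely to the gradient homogeneity $\nabla N(\lambda\xi)=\operatorname{sign}(\lambda)\nabla N(\xi)$ that you use to collapse $|h'(u)|\cdot\operatorname{sign}(h'(u))$ into $h'(u)$. The paper states Lemma~\ref{L:chain} without proof, so there is nothing further to compare; your handling of the sign bookkeeping and the use of \eqref{hom} for the second summand are the intended steps.
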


A basic fact is the following result which can be found in \cite[Theorem 2.1]{FK}. 
\begin{theorem}\label{T:nice}
Let $f(x) = \frac{N^0(x)^2}2$.
Then $\Delta_N f(x) = n$ for every $x\in \Rn$.
\end{theorem}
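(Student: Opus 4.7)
The plan is to show that $N(\nabla f)\,\nabla N(\nabla f) = x$ as vector fields on $\Rn\setminus\{0\}$, whereupon
\[
\Delta_N f = \operatorname{div}\bigl(N(\nabla f)\,\nabla N(\nabla f)\bigr) = \operatorname{div}(x) = n.
\]
Once the vector field identity is established away from the origin, both sides extend continuously to all of $\Rn$ (both vanish at $0$), so the divergence equals $n$ everywhere.

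The first step is to compute $\nabla f$. Since $f = \tfrac{1}{2}(N^0)^2$, the usual chain rule yields
\[
\nabla f(x) = N^0(x)\,\nabla N^0(x)\qquad\text{for }x\neq 0.
\]
Next I would exploit the homogeneity properties: $N$ is $1$-homogeneous, so $\nabla N$ is $0$-homogeneous (on $\Rn\setminus\{0\}$). Using these and \eqref{Finabla}, which gives $N(\nabla N^0(x))=1$, I obtain
\[
N(\nabla f(x)) = N^0(x)\,N(\nabla N^0(x)) = N^0(x),
\]
and
\[
\nabla N(\nabla f(x)) = \nabla N\bigl(N^0(x)\,\nabla N^0(x)\bigr) = \nabla N(\nabla N^0(x)).
\]
Now I invoke the second identity in Lemma~\ref{L:BP}, namely $N^0(x)\,\nabla N(\nabla N^0(x)) = x$, which gives $\nabla N(\nabla N^0(x)) = x/N^0(x)$.

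Combining these computations,
\[
N(\nabla f)\,\nabla N(\nabla f) = N^0(x)\cdot\frac{x}{N^0(x)} = x,
\]
as desired, and taking the Euclidean divergence yields $n$. The only subtle point is the behavior at $x=0$, where $\nabla f$ is only continuous and the composition $\nabla N(\nabla f)$ is \emph{a priori} undefined; but since the product $N(\nabla f)\,\nabla N(\nabla f)$ coincides with $x$ on $\Rn\setminus\{0\}$ and extends to the smooth field $x$ on all of $\Rn$, its distributional divergence is identically $n$, justifying the conclusion pointwise. The main (though modest) obstacle is lining up the correct duality identity from Lemma~\ref{L:BP} and \eqref{Finabla} to collapse the composition $\nabla N\circ\nabla N^0$ into the identity map rescaled by $1/N^0$; once that is in hand, everything else is homogeneity bookkeeping.
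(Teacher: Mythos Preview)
Your argument is correct. Note, however, that the paper does not supply its own proof of this theorem: it simply cites \cite[Theorem 2.1]{FK} and moves on. Your computation is the standard one and is essentially what one finds in that reference, so there is no meaningful divergence of approach to compare.
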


A notable consequence of Theorem \ref{T:nice} is the following result that shows the remarkable fact that, on functions of the dual norm $N^0$, the nonlinear operator $\Delta_N$ acts linearly. In this respect, one should see the interesting paper \cite{AIS}.

\begin{corollary}\label{C:supernice}
If $k\in C^2(\R)$, one has in $\Rn\setminus\{0\}$
\[
\Delta_N(k\circ N^0) = k''(N^0) + \frac{n-1}{N^0} k'(N^0).
\]
\end{corollary}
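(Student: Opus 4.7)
The corollary is essentially a two-step application of the chain rule in Lemma \ref{L:chain}, bootstrapped off of Theorem \ref{T:nice}. The plan is first to extract from Theorem \ref{T:nice} the value of $\Delta_N(N^0)$, and then to apply the chain rule once more with the outer function $k$.

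Concretely, the first step is to write $f = h \circ N^0$ with $h(t) = t^2/2$, so that $h'(t) = t$ and $h''(t) = 1$. By Lemma \ref{L:chain},
\[
n \;=\; \Delta_N f \;=\; h'(N^0)\,\Delta_N(N^0) + h''(N^0)\,N(\nabla N^0)^2 \;=\; N^0\,\Delta_N(N^0) + N(\nabla N^0)^2.
\]
Now I would invoke the identity \eqref{Finabla}, which gives $N(\nabla N^0(x)) = 1$ on $\R^n\setminus\{0\}$. Substituting this yields $N^0 \Delta_N(N^0) = n-1$, hence
\[
\Delta_N(N^0) = \frac{n-1}{N^0} \quad \text{in } \R^n\setminus\{0\}.
\]

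The second step is to apply Lemma \ref{L:chain} a second time, now with $u = N^0$ and $h = k$:
\[
\Delta_N(k \circ N^0) = k'(N^0)\,\Delta_N(N^0) + k''(N^0)\,N(\nabla N^0)^2.
\]
Inserting the value of $\Delta_N(N^0)$ just computed and using $N(\nabla N^0) = 1$ once more delivers the desired formula
\[
\Delta_N(k \circ N^0) = k''(N^0) + \frac{n-1}{N^0}\,k'(N^0).
\]

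There is really no obstacle here: everything follows formally once Theorem \ref{T:nice}, Lemma \ref{L:chain}, and the fundamental normalisation \eqref{Finabla} are in hand. The only point where one should be careful is that the chain rule in Lemma \ref{L:chain} requires $u \in C^2$, which is fine since $N^0 \in C^2(\R^n\setminus\{0\})$ by the standing assumptions on the Minkowski norm. The ``miracle'' of linearity on radial (with respect to $N^0$) functions, emphasised in the statement, is already encoded in the fact that the nonlinear term $h''(u) N(\nabla u)^2$ of the chain rule collapses to $k''(N^0)$ thanks to \eqref{Finabla}; this is exactly why the operator $\Delta_N$ behaves linearly on such functions.
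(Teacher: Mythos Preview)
Your proof is correct and is precisely the intended derivation: the paper states the corollary without proof as a direct consequence of Theorem \ref{T:nice}, and your two applications of Lemma \ref{L:chain} together with \eqref{Finabla} fill in exactly that gap.
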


\bibliographystyle{amsplain}

\end{document}